\author{Christophe Sabot, Pierre Tarr\`es, Xiaolin Zeng}
\title{The Vertex Reinforced Jump Process and a Random Schrödinger operator on finite graphs}
\newtheorem{lem}{Lemma}
\newtheorem{thm}{Theorem}
\newtheorem{defi}{Definition}
 \newtheorem{coro}{Corollary}
\newtheorem{prop}{Proposition}
\newtheorem{clai}{Claim} \usepackage{thmtools}
\declaretheoremstyle[notefont=\bfseries,notebraces={}{},%
headpunct={},postheadspace=1em]{mystyle}
\declaretheorem[style=mystyle,numbered=no,name=Theorem]{thm-hand}
\def\P{\mathbb{ P}} 
\def\R{\mathbb{ R}}
\def\ddd{{\mathcal D}}
\def\indic{{\mathds{1}}}
\def\qqq{{\mathcal Q}}
\def\demi{{1\over 2}}
\def\be{\beta}
\def\g{\gamma}
\def\G{\Gamma}
\newcommand{\Tc}{\cal T}
\newcommand{\s}{\sigma}
\def\NN{\mathbb{N}}
\newcommand{\tbe}{\tilde{\beta}}
\date{}
\begin{document}
\maketitle
\begin{abstract}
  We introduce a new exponential family of probability distributions, which can be viewed as a multivariate
   generalization of the Inverse Gaussian distribution.
  Considered as the potential of a random Schr\"odinger operator, this exponential family is related to the random field that gives the mixing measure
  of the Vertex Reinforced Jump Process (VRJP), and hence to the mixing measure of the Edge Reinforced Random Walk (ERRW), the so-called magic formula.
  In particular, it yields by direct computation the value of the normalizing constants of these mixing measures, which solves a question raised by Diaconis.
The results of this paper are instrumental in \cite{Sabot-Zeng2015}, where several properties of the VRJP and the ERRW are proved, in particular
a functional central limit theorem in transient regimes, and recurrence of the 2-dimensional ERRW.
\end{abstract}
\section{Introduction}
\label{sec_intro}
In this paper we introduce a new multivariate exponential family, which is
a multivariate generalization of the inverse Gaussian law. This exponential family is associated
 to a network of conductances and provides a random field on the vertices of the network, the latter having
the remarkable property that the marginals have inverse gaussian law and that the field is decorrelated at distance
two.

This exponential family is mainly motivated by the study of two self-interacting processes, namely the Edge Reinforced Random Walk (ERRW) and
the closely related Vertex Reinforced Jump Process (VRJP), but it could also find some applications
in other topics, such as Bayesian statistics for instance. Note that Diaconis and Rolles \cite{diaconis2006bayesian} introduced in 2006 a family of Bayesian priors for reversible Markov chains, similarly associated to the limit measure of the ERRW. 

More precisely, we consider a non-directed finite graph \(\mathcal{G}=(V,E)\) with strictly positive conductances \(W_{i,j}=W_{j,i}\)
on the edges. Denote by
\(\Delta^W\) the discrete Laplace operator associated to the conductance network \((W_{i,j})\) and write \(W_i=\sum_{j: \{i,j\}\in E}W_{i,j}\).
The exponential family provides a random vector of positive reals \((\beta_j)_{j\in V}\)
such that
\[H_\beta:=-\Delta^W+V\]
is a.s.\ a positive operator, where \(V=2\beta-W\) is
the operator of multiplication by \((2\beta_i-W_i)\) and \(2\beta-W\) is considered as a random potential. We prove in Theorem~\ref{thm3} that if the Green function is defined by \(G=(H_\beta)^{-1}\), then the field \((e^{u_j})\) giving the mixing measure of the VRJP starting from \(i_0\), c.f.~\cite{sabot2011edge}, is equal in law to \((G(i_0,j)/G(i_0,i_0))\).

This has several consequences. Firstly, it relates the VRJP to a random Schr\"odinger operator with an explicit random potential with decorrelation at distance 2.
Note that Anderson localization was the main motivation in the papers of Disertori, Spencer, Zirnbauer (\cite{disertori2010anderson,disertori2010quasi}): in these works the supersymmetric
field related to the mixing measure of the VRJP (c.f.~\cite{sabot2011edge}) is viewed as a toy model for some supersymmetric fields
that appears in the physics literature in connection with random band matrices.
Secondly, it enables one to couple the mixing fields of the VRJP starting from different points. Finally, using the link between VRJP and ERRW \cite{sabot2011edge}, it yields an answer to
an old question of Diaconis about the direct computation of the normalizing constant of the `magic formula' for the mixing measure of ERRW.

Results of this paper are instrumental in \cite{Sabot-Zeng2015}, where the representation in terms of a random Schr\"odinger operator is extended to infinite graphs. Interesting
new phenomena appear in the transient case, where a generalized eigenfunction of the Schr\"odinger operator is involved in the representation.
Several consequences follow on the behavior of the VRJP and the ERRW in \cite{Sabot-Zeng2015}:
in particular a functional central limit theorem is proved for the VRJP
and the ERRW in dimension $d\ge 3$ at weak reinforcement, and recurrence of the 2-dimensional ERRW is shown, giving a full answer to an old question of
Diaconis.

The paper is organized as follows. In Section
\ref{sec_exp}, we define the new exponential family of distributions and give its first properties. 
In section
\ref{sec:vrjp}, we discuss the link between the exponential family and the Vertex reinforced jump
processes. In Section~\ref{sec_rw} we consider the ERRW and answer the question of Diaconis.
Sections~\ref{sec_proof_th1} and~\ref{s_proof_thm3} provide the proof of the two main results, namely Theorem~\ref{thm_main} and Theorem~\ref{thm3}.
\section{A new exponential family}
\label{sec_exp}
Let \(V=\{1, \ldots, n\}\) be a finite set, and let
\((W_{i,j})_{i\neq j}\) be a set of non-negative reals with \(W_{i,j}=W_{j,i}\ge 0\). Denote by \(E\) the edges associated to the positive
\(W_{i,j}\), i.e. consider the graph \({\mathcal G}= (V,E)\) with \(\{i,j\}\in E\) if and only if \(W_{i,j}>0\), and write \(i\sim j\) if \(\{i,j\}\in E\). Let \(\hbox{d}_{\mathcal G}\) be
the graph distance on \(\mathcal G\). 

When \(A\) is a symmetric operator on \(\mathbb{R}^{V}\) (also be considered as a \(V\times V\) matrix),  write \(A>0\) if \(A\) is positive definite, and \(|A|\) for its determinant.
\begin{thm}
\label{thm_main}
  Let \(P=(P_{i,j})_{1\leq i,j\leq n}\) be the symmetric matrix given by 
  \[
  P_{i,j}=
  \begin{cases}
    0 & i=j,\\
    W_{i,j} & i\neq j.
  \end{cases}
  \] 
  For any \(\theta\in \mathbb{R}_+^n\), we have
  \begin{align}
    \label{laplace}
 (\frac{2}{\pi})^{n/2}    \int \indic_{\{2\beta-P>0\}} e^{-\left<\theta,\beta\right>}\frac{d\beta}{\sqrt{|2\beta-P|}} =\exp\left(-\sum_{\{i,j\}\in E}W_{i,j}\sqrt{\theta_i\theta_j}\right)\cdot
    \prod_{i=1}^n\frac{1}{\sqrt{\theta_i}}
  \end{align}
  where \(d\beta=d\beta_1\cdots d\beta_n\), and \(2\beta -P\) is the operator on \(\mathbb{R}^{V}\) defined by
\[[(2\beta-P)f](i)=2\beta_{i}f(i)-\sum_{j:j\sim i}W_{i,j}f(j).\]
\end{thm}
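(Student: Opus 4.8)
The plan is to reduce the integral to a product of one–dimensional integrals of inverse–Gaussian type, after turning the positivity constraint into a product constraint by Gaussian elimination. First I would get rid of $\theta$: conjugating by $D=\mathrm{diag}(\sqrt{\theta_i})$ gives $2\beta-P=D^{-1}(2\tilde\beta-\tilde P)D^{-1}$ with $\tilde\beta_i=\theta_i\beta_i$ and $\tilde P_{i,j}=\sqrt{\theta_i\theta_j}\,W_{i,j}$, and this substitution maps $\{2\beta-P>0\}$ onto $\{2\tilde\beta-\tilde P>0\}$, has Jacobian $\prod_i\theta_i^{-1}$, and multiplies $|2\beta-P|^{-1/2}$ by $\prod_i\theta_i^{1/2}$. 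Hence the factor $\prod_i\theta_i^{-1/2}$ comes out for free and it suffices to prove the identity for $\theta=(1,\dots,1)$ and arbitrary conductances, i.e.
\[
(\tfrac{2}{\pi})^{n/2}\int_{\{2\beta-P>0\}}e^{-\sum_i\beta_i}\,\frac{d\beta}{\sqrt{|2\beta-P|}}=\exp\Big(-\sum_{\{i,j\}\in E}W_{i,j}\Big).
\]
The shape of the answer is explained by the identity $\sum_{\{i,j\}\in E}W_{i,j}\sqrt{\theta_i\theta_j}=\min\{\langle\theta,\beta\rangle:\,2\beta-P\succeq0\}$: testing $\langle\sqrt\theta,(2\beta-P)\sqrt\theta\rangle\ge0$ is precisely $\langle\theta,\beta\rangle\ge\sum_{\{i,j\}}W_{i,j}\sqrt{\theta_i\theta_j}$, with equality at $\beta^*_i=\frac1{2\sqrt{\theta_i}}\sum_{j\sim i}W_{i,j}\sqrt{\theta_j}$, where $2\beta^*-P$ is conjugate to the weighted graph Laplacian with conductances $\sqrt{\theta_i\theta_j}W_{i,j}$ and annihilates $\sqrt\theta$. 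So the right-hand side is an exact Laplace asymptotic, the mass of the integral concentrating near the boundary point $\beta^*$ of the cone $\{2\beta-P>0\}$.

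Next, to remove the constraint, fix an ordering of $V$ and iterate the Schur complement along the last coordinate: $\{2\beta-P>0\}$ is exactly the set on which all the Gaussian–elimination pivots
\[
u_k:=2\beta_k-\big\langle b_k,\,\big((2\beta-P)_{\{1,\dots,k-1\}}\big)^{-1}b_k\big\rangle,\qquad b_k=(W_{k,j})_{j<k},
\]
are positive; moreover $|2\beta-P|=\prod_k u_k$, and $\beta\mapsto u$ is triangular with constant Jacobian $2^{-n}$, so the left-hand side becomes an unconstrained integral over $(0,\infty)^n$ of $e^{-\frac12\sum_k(u_k+c_k)}\prod_k u_k^{-1/2}$, with $c_k$ depending only on $u_1,\dots,u_{k-1}$. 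The one–dimensional input is the elementary inverse–Gaussian identity
\[
\int_0^\infty u^{-1/2}\,e^{-au-b/u}\,du=\sqrt{\pi/a}\;e^{-2\sqrt{ab}}\qquad(a>0,\ b\ge0),
\]
which for $b=0$ contributes $\sqrt{2\pi}$ per integration (and $(\tfrac2\pi)^{n/2}2^{-n}(2\pi)^{n/2}=1$ disposes of all constants) and whose cross term $b/u$ is produced by a Schur complement. The cleanest way to run the bookkeeping is an induction on $n$: for a leaf $v$ with unique neighbour $a$, integrate out $\beta_v$ through the Schur complement at $v$ (the $b=0$ case), producing a factor $\exp(-\tfrac{\theta_v}{2}W_{v,a}^2((2\beta-P)_{\widehat v}^{-1})_{a,a})$, and then integrate out $\beta_a$ through the Schur complement at $a$ (the case $b=\theta_vW_{v,a}^2/2$), which yields exactly the factor $e^{-W_{v,a}\sqrt{\theta_v\theta_a}}$; the constants collapse to $\theta_v^{-1/2}$ and one is left with the same statement for $\mathcal G\setminus\{v\}$. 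With the base case $n=1$ being a $\Gamma$-integral, this settles all forests.

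The obstacle is the general graph. The Schur–complement corrections $c_k$ genuinely couple the $u_j$'s (each is a rational function of the earlier $u$'s), so the telescoping above works verbatim only in the forest case; this is the step I expect to require the most care. To reach graphs with cycles I would instead induct on the number of edges. Starting from $\partial_{W_{k,\ell}}|2\beta-P|^{-1/2}=|2\beta-P|^{-1/2}\big((2\beta-P)^{-1}\big)_{k,\ell}$, the derivative in $W_{k,\ell}$ of the left-hand side equals $\E\big[\big((2\beta-P)^{-1}\big)_{k,\ell}\big]$ under the probability measure $\propto\indic_{\{2\beta-P>0\}}e^{-\langle\theta,\beta\rangle}|2\beta-P|^{-1/2}\,d\beta$, plus a boundary contribution from the shrinking of $\{2\beta-P>0\}$ as $W_{k,\ell}$ grows. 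Since $|2\beta-P|^{-1/2}$ is (integrably) singular on that boundary, this boundary term must be extracted by a regularisation — for instance replacing $\{2\beta-P>0\}$ by $\{2\beta-P>\varepsilon\,\mathrm{Id}\}$ and letting $\varepsilon\downarrow0$ — and the content of the theorem is that it combines with the expectation to give $\partial_{W_{k,\ell}}\log(\mathrm{LHS})=-\sqrt{\theta_k\theta_\ell}$; integrating from $W_{k,\ell}=0$ (the edge absent) and inducting on edges then finishes the proof. Controlling this singular boundary term — equivalently, taming the coupling in the Gaussian–elimination step — is, in my view, the crux; everything else is the one–dimensional integral above together with the numerology of constants.
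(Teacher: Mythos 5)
Your setup — the Gaussian‑elimination change of variables $x_k=u_k$ with Jacobian $2^{-n}$, $|2\beta-P|=\prod_k x_k$, and the one‑dimensional inverse‑Gaussian integral — is exactly the route taken in the paper (Lemma~\ref{lem1}, Claim~\ref{claim-laplace}, and the map $\Psi$). The reduction to $\theta=\mathbf 1$ by conjugation is a harmless cosmetic difference; the paper keeps $\theta$ throughout. So up to the point where you must actually integrate over $(0,\infty)^n$, your plan and the paper's coincide.

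The gap is your claim that the telescoping breaks for graphs with cycles. It does not, and the algebraic identity that makes it work for an arbitrary graph is the heart of the proof, which you are missing. After the change of variables the exponent is $\sum_{l=1}^n\bigl[\tfrac{\theta_l x_l}{2}+\tfrac{1}{2x_l}\sum_{k>l}\theta_k H_{l,k}^2\bigr]$ where $H_{l,k}$ are the Gaussian‑elimination entries. Integrating out $x_n,x_{n-1},\dots$ in that order, introduce for $1\le l\le m$ the bookkeeping quantities
\begin{align*}
R_{l,m}&=\Bigl(\sum_{j>m}H_{l,j}\sqrt{\theta_j}\Bigr)^{2}+\sum_{k=l+1}^{m}\theta_k H_{l,k}^{2},\qquad
S_{l,m}=\frac{\theta_l x_l}{2}+\frac{R_{l,m}}{2x_l},
\end{align*}
so that at step $m$ the exponent is $\sum_{l\le m}S_{l,m}$, with each $S_{l,m}$ depending only on $x_1,\dots,x_l$. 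Integrating out $x_m$ with Claim~\ref{claim-laplace} produces a factor $\exp(-\sqrt{R_{m,m}\theta_m})\sqrt{2\pi/\theta_m}$, and $\sqrt{R_{m,m}\theta_m}=\sum_{j>m}H_{m,j}\sqrt{\theta_m\theta_j}$. Plugging in the recursion $H_{m,j}=W_{m,j}+\sum_{l<m}H_{l,m}H_{l,j}/x_l$ splits this into the ``edge'' term $\sum_{j>m}W_{m,j}\sqrt{\theta_m\theta_j}$ plus cross terms, and the whole point is the cancellation
\[
S_{l,m}-S_{l,m-1}=\frac{R_{l,m}-R_{l,m-1}}{2x_l}=-\frac{H_{l,m}\sqrt{\theta_m}}{x_l}\sum_{j>m}H_{l,j}\sqrt{\theta_j},
\]
which absorbs every cross term exactly and leaves $\sqrt{R_{m,m}\theta_m}+\sum_{l<m}S_{l,m}=\sum_{j>m}W_{m,j}\sqrt{\theta_m\theta_j}+\sum_{l<m}S_{l,m-1}$. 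So the integral telescopes for \emph{every} graph, with no hypothesis on cycles; the ``coupling'' you worried about is precisely what the $R$–$S$ identity tames. The alternative route you sketch (differentiate in $W_{k,\ell}$, regularize, control boundary terms) is not carried out, and the boundary contribution you flag as ``the crux'' is indeed the entire difficulty of that approach; as it stands it is not a proof. Keep your first half and replace the edge‑induction by the $R_{l,m},S_{l,m}$ telescoping, and you have the paper's argument.
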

\begin{defi}
\label{def_exp_fam}
The exponential family of random probability measures \(\nu^{W,\theta}(d\beta)\) is defined by
  \[  \nu^{W,\theta}(d\beta)= \mathds{1}_{2\beta-P>0}(\frac{2}{\pi})^{n/2}\exp\left(-\left<\theta,\beta\right>+\sum_{\{i,j\}\in E}W_{i,j}\sqrt{\theta_{i}\theta_{j}}\right) \frac{\prod_{i}\sqrt{\theta_{i}}}{\sqrt{|2\beta-P|}}d\beta\]
where \(\left<\theta,\beta\right>=\sum_{i\in V}\theta_{i}\beta_{i}\). We will simply write \(\nu^W\) for \(\nu^{W,1}\) in the case where \(\theta_i=1\) for all \(i\in V\).
\end{defi}
The proof of Theorem \ref{thm_main} is given in Section~\ref{sec_proof_th1}. We deduce the following simple but important properties of the measure \(\nu^{W,\theta}\).
\begin{prop}
\label{prop-main}
The Laplace transform of \(\nu^{W,\theta}\) is
\[
\int e^{-\left<\lambda,\beta\right>}   \nu^{W,\theta}(d\beta)=
\exp\left(-\sum_{\{i,j\}\in E}W_{i,j}\left(\sqrt{\lambda_i+\theta_i}\sqrt{\lambda_j+\theta_j}-\sqrt{\theta_i\theta_j}\right)\right)\cdot
    \prod_{i=1}^n\sqrt{\frac{\theta_{i}}{{\lambda_i+\theta_i}}} 
\]
Moreover, if \(\beta\) is a random vector with distribution \( \nu^{W,\theta}\), then
  \begin{itemize}
\item
The marginals \(\beta_i\) are such that \(\frac{1}{2\beta_{i}\theta_{i}}\) is an Inverse Gaussian
  distribution with parameters \- \((\frac{1}{\sum_{j\sim i}W_{i,j}\sqrt{\theta_{i}\theta_{j}}},1)\)
  \item
If \(V_1\subset V\), \(V_2\subset V\) are two subsets of \(V\) such that \(d_\mathcal{G}(V_1,V_2)\ge 2\), then
\((\beta_i)_{i\in V_1}\) and \((\beta_j)_{j\in V_2}\) are independent.
\end{itemize}
\end{prop}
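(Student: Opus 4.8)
The plan is to deduce all three assertions from Theorem~\ref{thm_main} by manipulating Laplace transforms. Starting from Definition~\ref{def_exp_fam}, for every $\lambda\in\mathbb{R}_+^n$ (and in fact for $\lambda_i>-\theta_i$, $i\in V$) I would write
\[
\int e^{-\langle\lambda,\beta\rangle}\,\nu^{W,\theta}(d\beta)
=\Big(\tfrac{2}{\pi}\Big)^{n/2}e^{\sum_{\{i,j\}\in E}W_{i,j}\sqrt{\theta_i\theta_j}}\prod_i\sqrt{\theta_i}\;
\int\indic_{\{2\beta-P>0\}}e^{-\langle\lambda+\theta,\beta\rangle}\frac{d\beta}{\sqrt{|2\beta-P|}},
\]
and then apply Theorem~\ref{thm_main} with the parameter $\theta$ replaced by $\lambda+\theta\in\mathbb{R}_+^n$. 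The remaining integral equals $e^{-\sum_{\{i,j\}\in E}W_{i,j}\sqrt{(\lambda_i+\theta_i)(\lambda_j+\theta_j)}}\prod_i(\lambda_i+\theta_i)^{-1/2}$, and combining the two edge sums and the two vertex products yields the announced formula for the Laplace transform of $\nu^{W,\theta}$ (in particular, taking $\lambda=0$ re-proves that $\nu^{W,\theta}$ is a probability measure).

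For the marginal law of $\beta_i$, I would specialise this formula to $\lambda=s\,e_i$ (all coordinates zero except the $i$-th, equal to $s\ge 0$): every edge not incident to $i$ drops out, the vertex product collapses to a single factor, and one is left with
\[
\mathbb{E}\!\left[e^{-s\beta_i}\right]=\exp\!\Big(-c_i\big(\sqrt{s+\theta_i}-\sqrt{\theta_i}\,\big)\Big)\sqrt{\tfrac{\theta_i}{s+\theta_i}},\qquad c_i:=\textstyle\sum_{j\sim i}W_{i,j}\sqrt{\theta_j}.
\]
Using the elementary identity $\int_0^\infty x^{-1/2}e^{-ax-b/x}\,dx=\sqrt{\pi/a}\,e^{-2\sqrt{ab}}$ ($a,b>0$), this is seen to be the Laplace transform of the density $g_i(b)=\pi^{-1/2}\sqrt{\theta_i}\,e^{c_i\sqrt{\theta_i}}\,b^{-1/2}e^{-\theta_i b-c_i^2/(4b)}$ on $(0,\infty)$, so $g_i$ is the density of $\beta_i$; after the change of variables $y=1/(2\theta_i b)$ this becomes $\frac{e^{c_i\sqrt{\theta_i}}}{\sqrt{2\pi}}\,y^{-3/2}\exp\!\big(-\tfrac12 c_i^2\theta_i\,y-\tfrac1{2y}\big)$, which is precisely the Inverse Gaussian density of parameters $(\mu,1)$ with $\mu=(c_i\sqrt{\theta_i})^{-1}=\big(\sum_{j\sim i}W_{i,j}\sqrt{\theta_i\theta_j}\,\big)^{-1}$. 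This gives the first bullet.

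For the independence statement, given $V_1,V_2$ with $d_{\mathcal{G}}(V_1,V_2)\ge 2$ (so $V_1\cap V_2=\emptyset$), I would feed into the Laplace transform formula the vector $\lambda$ equal to $\mu\ge 0$ on $V_1$, to $\nu\ge 0$ on $V_2$, and to $0$ elsewhere. The vertex product factorises at once into a factor over $V_1$ depending only on $\mu$ times a factor over $V_2$ depending only on $\nu$. In the edge sum, an edge contributes $0$ unless it meets $V_1\cup V_2$, and since $d_{\mathcal{G}}(V_1,V_2)\ge 2$ no edge joins $V_1$ to $V_2$; hence each surviving edge meets exactly one of the two sets and its term is a function of $\mu$ alone (if it meets $V_1$) or of $\nu$ alone (if it meets $V_2$). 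Thus the exponent is a sum of a function of $\mu$ and a function of $\nu$, the joint Laplace transform factorises, and taking $\nu=0$, resp.\ $\mu=0$, identifies the two factors as the Laplace transforms of $(\beta_i)_{i\in V_1}$ and $(\beta_j)_{j\in V_2}$; since $\beta$ takes values in $\mathbb{R}_+^V$ almost surely and the multivariate Laplace transform characterises the joint law, independence follows.

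Once Theorem~\ref{thm_main} is in hand the argument is essentially bookkeeping. The two points that I would treat most carefully are: matching the Inverse Gaussian parametrisation in the second bullet, i.e.\ getting the factor $2\theta_i$ and the normalising constant right when passing from the density of $\beta_i$ to that of $1/(2\theta_i\beta_i)$; and, in the third bullet, checking that the hypothesis $d_{\mathcal{G}}(V_1,V_2)\ge 2$ is exactly what makes the edge sum split — a surviving edge incident to $V_1$ may have its other endpoint anywhere outside $V_1$, in particular at a common neighbour of $V_1$ and $V_2$, and one must verify that this still couples the term to $\mu$ only.
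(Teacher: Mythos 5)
Your proposal is correct and takes essentially the same route as the paper: the paper likewise obtains the Laplace transform by substituting $\lambda+\theta$ for $\theta$ in Theorem~\ref{thm_main}, then reads off independence at distance $2$ from the factorisation of the exponent, and identifies the marginals as reciprocal inverse Gaussian by matching Laplace transforms. You have simply spelled out the density computation and change of variables $y=1/(2\theta_i\beta_i)$ that the paper leaves implicit, and your constants check out.
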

\begin{proof} 
The Laplace transform of \(\nu^{W,\theta}\) can be computed directly from Theorem~\ref{thm_main}, from which we deduce independence at distance at least \(2\). We can also deduce, by identification of the Laplace transforms, that the marginals of this law are reciprocal inverse gaussian up to a multiplicative constant.
\end{proof}
The family can be reduced to the case \(\theta=1\) by changing \(W\), as shown in the next corollary.
\begin{coro}
Let \((\beta_j)_{j\in V}\) be distributed according to \(\nu^{W,\theta}\). Then \((\theta\beta)\) is distributed
according to \(\nu^{W^\theta}\), where \(W^\theta_{i,j}=W_{i,j}\sqrt{\theta_i\theta_j}\).
\end{coro}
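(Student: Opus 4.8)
The plan is to prove the identity by a direct change of variables in the density $\nu^{W,\theta}$. Write $\gamma=\theta\beta$, i.e.\ $\gamma_i=\theta_i\beta_i$; since each $\theta_i>0$ this is a diffeomorphism of $\R_+^n$, with $d\gamma=(\prod_i\theta_i)\,d\beta$. Let $D$ be the diagonal matrix with entries $D_{ii}=\sqrt{\theta_i}$, and let $P^\theta$ be the matrix associated with the conductances $W^\theta_{i,j}=W_{i,j}\sqrt{\theta_i\theta_j}$ as in Theorem~\ref{thm_main}. The computational heart of the argument is the conjugation identity
\[
2\gamma-P^\theta \;=\; D\,(2\beta-P)\,D,
\]
checked entrywise: on the diagonal the left side is $2\gamma_i=2\theta_i\beta_i$ and the right side is $\sqrt{\theta_i}\cdot 2\beta_i\cdot\sqrt{\theta_i}$; off the diagonal the $(i,j)$ entry on the left is $-W_{i,j}\sqrt{\theta_i\theta_j}$ and on the right $\sqrt{\theta_i}\,(-W_{i,j})\,\sqrt{\theta_j}$. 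As $D$ is invertible, this yields at once that $2\gamma-P^\theta>0$ if and only if $2\beta-P>0$, and that $|2\gamma-P^\theta|=(\prod_i\theta_i)\,|2\beta-P|$.

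Next I would substitute into the density. First note $W^\theta_{i,j}>0$ exactly when $W_{i,j}>0$, so the underlying graph $\mathcal G$ is unchanged. Under $\gamma=\theta\beta$ one has $\langle\theta,\beta\rangle=\sum_i\theta_i\beta_i=\sum_i\gamma_i=\langle 1,\gamma\rangle$ and $\sum_{\{i,j\}\in E}W_{i,j}\sqrt{\theta_i\theta_j}=\sum_{\{i,j\}\in E}W^\theta_{i,j}$, so the exponential factor of $\nu^{W,\theta}(d\beta)$ becomes exactly that of $\nu^{W^\theta}(d\gamma)=\nu^{W^\theta,1}(d\gamma)$. It then remains to track the powers of $\theta_i$ coming from (i) the explicit prefactor $\prod_i\sqrt{\theta_i}$ in $\nu^{W,\theta}$, (ii) the determinant, via $\sqrt{|2\beta-P|}=(\prod_i\theta_i)^{-1/2}\sqrt{|2\gamma-P^\theta|}$, and (iii) the Jacobian $d\beta=(\prod_i\theta_i)^{-1}d\gamma$: these combine to $(\prod_i\theta_i)^{1/2+1/2-1}=1$, so the image of $\nu^{W,\theta}$ under $\beta\mapsto\theta\beta$ is precisely $\nu^{W^\theta}$.

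I do not expect a genuine obstacle here; the only point requiring care is the bookkeeping of the three sources of $\theta$-factors in (i)--(iii) and the verification that they cancel, together with the elementary conjugation identity above. Alternatively, and even faster, one can argue via Laplace transforms using Proposition~\ref{prop-main}: for $\lambda\in\R_+^n$ one has $\E\!\left[e^{-\langle\lambda,\theta\beta\rangle}\right]=\E\!\left[e^{-\langle\lambda\theta,\beta\rangle}\right]$, and substituting the vector $(\lambda_i\theta_i)_i$ into the formula of Proposition~\ref{prop-main}, using $\sqrt{\lambda_i\theta_i+\theta_i}=\sqrt{\theta_i}\sqrt{\lambda_i+1}$, turns it after the obvious simplifications into the Laplace transform of $\nu^{W^\theta,1}$; since $\nu^{W,\theta}$ is supported in $\R_+^n$ (positive definiteness of $2\beta-P$ forces $\beta_i>0$), equality of Laplace transforms is enough to conclude.
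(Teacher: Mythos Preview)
Your proposal is correct. The paper does not spell out a proof of this corollary; it is stated immediately after Proposition~\ref{prop-main} as a direct consequence, so the intended argument is almost certainly your second one via Laplace transforms. Your first argument, the direct change of variables in the density using the conjugation $2\gamma-P^\theta=D(2\beta-P)D$ with $D=\mathrm{diag}(\sqrt{\theta_i})$, is equally valid and arguably more self-contained since it does not rely on Proposition~\ref{prop-main}; the bookkeeping of the three $\theta$-factors you describe is accurate and they do cancel exactly.
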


It is clear from the expression of the Laplace transform that if the graph has several connected components then
the random field \((\beta_j)_{j\in V}\) splits accordingly into independent random subvectors. Therefore, we will always assume in
the sequel that the graph \(\mathcal{G}\) is connected. 

\section{Link with the Vertex reinforced Jump process}
\label{sec:vrjp}
\subsection{Vertex Reinforced Jump Process: definition and main properties}
In this section we explain the link between the exponential family of Section~\ref{sec_exp}
and the Vertex reinforced Jump Process (VRJP), which is a linearly
reinforced process in continuous time, defined in~\cite{davis2002continuous}, investigated on trees in \cite{basdevant2012continuous}, 
and on general graphs by the first two authors in~\cite{sabot2011edge}. 
Consider as in the previous section a conductance network \((W_{i,j})\)
and the associated graph \(\mathcal{G}=(V,E)\). Fix also some positive parameters \((\phi_i)_{i\in V}\) on the vertices. 
Assume that the graph \(\mathcal{G}\) is connected.

We call VRJP with conductances \((W_{i,j})\) and initial local time \((\phi_i)\)
the continuous-time process \((Y_t)_{t\ge0}\) on \(V\), starting at time \(0\) at some vertex \(i_0\in V\) and such that, 
if \(Y\) is at a vertex \(i\in V\) at time \(t\), then, conditionally on \((Y_s, s\le t)\), the process jumps to a neighbour \(j\) of \(i\) at rate 
\[W_{i,j}L_j(t),\]
where
\[L_j(t):=\phi_j+\int_0^t \mathds{1}_{\{Y_s=j\}}\,ds.\]
The following time change, introduced in~\cite{sabot2011edge}, plays a central role. Let 
\begin{equation}
\label{Dt}
D(t)=\sum_{i\in V}{(L_i^2(t)-\phi^2_i)},
\end{equation}
 define
\(Z_t\) as the time changed process
\[Z_t=Y_{D^{-1}(t)}.
\]
Let \((\ell_j(t))\) be the local time of \(Z\) at time \(t\) (that is, \(\ell_{j}(t)=\int_{0}^{t}\mathds{1}_{Z_{s}=j}ds\)).
Conditionally on the past, at time \(t\), the process \(Z\) jumps from \(Z_t=i\) to a neighbour \(j\) at rate (c.f.~\cite{sabot2013ray}, Lemma 3)
\[
\frac{W_{i,j}}{2}\sqrt{\frac{\phi^2_j+\ell_j(t)}{\phi^2_i+\ell_i(t)}}.
\]
We state below one of the main results of~\cite{sabot2011edge}, Proposition 1 and Theorem 2. The theorem was stated in~\cite{sabot2011edge}
in the case \(\phi=1\), this version of the theorem can be deduced by a simple change of time, details are given in Appendix~\ref{Appendix}.
\begin{thm}
\label{thm-ST}
Assume that \(\mathcal{G}\) is finite. Suppose that the VRJP starts at \(i_0\). The limit
\[
U_{i}=\frac{1}{2}\lim_{t\to\infty}\left(\log\left(\frac{\ell_{i}(t)+\phi_{i}^{2}}{\ell_{i_{0}}(t)+\phi_{i_{0}}^{2}}\right)-\log\left(\frac{\phi^2_{i}}{\phi^2_{i_0}}\right)\right)
\]
exists a.s.\ and, conditionally on \(U\), \(Z\) is a Markov jump processes with jump rate from \(i\) to \(j\)
\[
{1 \over 2} W_{i,j}e^{U_j-U_i}.
\]
Moreover \((U_j)\) has the following distribution on \(\{(u_i), \;\; u_{i_0}=0\}\)
\begin{equation}
\label{density_u}
\mathcal{Q}^{W,\phi}_{i_0}(du)={\prod_{j\neq i_0} \phi_j \over \sqrt{2\pi}^{\vert V\vert-1}} e^{-\sum_{j\in V} u_j} e^{-\frac{1}{2}\sum_{\{i,j\}\in E}W_{i,j}(e^{u_i-u_j}\phi_j^2+e^{u_j-u_i}\phi_i^2-2\phi_i\phi_j)
        } \sqrt{D(W,u)}\; du,
\end{equation}
with \(du = \prod_{j\in V\setminus\{i_0\}} du_j\) and
\[D(W,u)=\sum_T \prod_{\{i,j\} \in T} W_{i,j} e^{u_i+u_j}
\] 
where the sum runs on the set of spanning trees \(T\) of \(\mathcal{G}\). We simply write \(\mathcal{Q}_{i_{0}}^{W}\) for \(\mathcal{Q}_{i_{0}}^{W,1}\)
\end{thm}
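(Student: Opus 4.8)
\noindent The plan is to deduce this from the special case \(\phi=1\), which is precisely Proposition~1 and Theorem~2 of \cite{sabot2011edge}, by a deterministic time change — the route announced in the statement and carried out in Appendix~\ref{Appendix}. First I would introduce the clock \(A(s)=\int_0^s \phi_{Y_r}^{-1}\,dr\) and set \(\hat Y_t:=Y_{A^{-1}(t)}\). A direct inspection of the jump rates shows that when \(\hat Y\) sits at \(i\) it jumps to a neighbour \(j\) at rate \(\phi_i\,W_{i,j}L_j(A^{-1}(t))\), while its own local time at \(j\) equals \(\hat L_j(t):=L_j(A^{-1}(t))/\phi_j\); hence \(\hat Y\) is a VRJP started at \(i_0\), with all initial local times equal to \(1\), and with conductances \(\hat W_{i,j}:=W_{i,j}\phi_i\phi_j\) on the same graph — the factor \(\phi_j\) having been pulled out of the local time and \(\phi_i\) out of the clock speed. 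The case \(\phi=1\) of the theorem then applies to \(\hat Y\): it produces its time-changed process \(\hat Z\) (built from \(\hat D(t)=\sum_k(\hat L_k(t)^2-1)\)), the a.s.\ limit field \(\hat u\), the conditional Markov property of \(\hat Z\) with jump rates \(\tfrac12\hat W_{i,j}e^{\hat u_j-\hat u_i}\), and the law \(\hat u\sim\mathcal{Q}^{\hat W}_{i_0}\).

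\noindent The second step is to carry these statements over to the process \(Z\) of \eqref{Dt}. As both \(Z\) and \(\hat Z\) are time changes of \(Y\), they are time changes of one another, and a short computation (using \(\hat L_k=L_k/\phi_k\) and \(D'(s)=2L_{Y_s}(s)\)) shows that the clock relating them is state dependent: \(Z_t=\hat Z_{\eta(t)}\) with \(\eta'(t)=\phi_{Z_t}^{-2}\). Hence \(Z\), conditionally on the tail, is again a Markov jump process; the a.s.\ limit defining \(U\) exists because the one defining \(\hat u\) does, the two being linked through \(\ell_i(t)+\phi_i^2=L_i(D^{-1}(t))^2\) (immediate from the definition of \(D\)); and \(U\) is obtained from \(\hat u\) by the deterministic vertex-dependent shift \(c_k=\log(\phi_k/\phi_{i_0})\), namely the shift that turns the clock factors \(\phi_i^{\pm2}\) relating the two processes into the symmetric modification \(e^{\hat u_j-\hat u_i}\mapsto e^{U_j-U_i}\) of the exponents. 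With this identification the conditional jump rate \(\tfrac12\phi_i^{-2}\hat W_{i,j}e^{\hat u_j-\hat u_i}\) of \(Z\) is exactly \(\tfrac12 W_{i,j}e^{U_j-U_i}\).

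\noindent Finally one reads off the law of \(U\) from that of \(\hat u\); this is where the actual calculation lies. The map \(u\mapsto u+c\) has unit Jacobian and preserves \(\{u_{i_0}=0\}\), so the density of \(U\) at \(u\) is the density of \(\mathcal{Q}^{\hat W}_{i_0}\) at \(u-c\). Term by term, \(\hat W_{i,j}\bigl(e^{(u_i-c_i)-(u_j-c_j)}+e^{(u_j-c_j)-(u_i-c_i)}-2\bigr)=W_{i,j}\bigl(\phi_j^2 e^{u_i-u_j}+\phi_i^2 e^{u_j-u_i}-2\phi_i\phi_j\bigr)\), while \(e^{-\sum_k(u_k-c_k)}=e^{-\sum_k u_k}\prod_k(\phi_k/\phi_{i_0})\), and — using \(\phi_k e^{-c_k}=\phi_{i_0}\) for all \(k\), whence \(\prod_{\{i,j\}\in T}\hat W_{i,j}e^{-c_i-c_j}=\phi_{i_0}^{2(|V|-1)}\prod_{\{i,j\}\in T}W_{i,j}\) for every spanning tree \(T\) — one gets \(\sqrt{D(\hat W,u-c)}=\phi_{i_0}^{|V|-1}\sqrt{D(W,u)}\). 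Collecting the powers of \(\phi_{i_0}\) leaves the prefactor \(\prod_{j\neq i_0}\phi_j\), which is exactly \(\mathcal{Q}^{W,\phi}_{i_0}\). The main obstacle is just this bookkeeping — keeping the three time changes \(A\), \(D\), \(\hat D\) and the shift \(c\) consistent — rather than anything conceptual: once the time change is set up correctly, the whole theorem reduces to its \(\phi=1\) version.
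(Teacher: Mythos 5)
Your proposal is essentially the paper's own proof (Appendix~B, ``Time rescaling''): same clock \(A(s)=\int_0^s\phi_{Y_r}^{-1}\,dr\), same identification of \(\hat Y\) as a \(\phi\equiv 1\) VRJP with conductances \(W_{i,j}\phi_i\phi_j\), same state-dependent clock \(\eta'(t)=\phi_{Z_t}^{-2}\) relating \(Z\) to \(\hat Z\), and the same translation of the density by \(c_k=\log(\phi_k/\phi_{i_0})\). The bookkeeping you sketch matches the paper's change of variables term for term, so this is the same route, not a new one.
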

The fact that the total mass of the measure \(\mathcal{Q}_{i_0}^{W,\phi}\) is 1 is both a non-trivial and a useful fact: in particular, it plays a central role in the delocalization and
localization results of~\cite{disertori2010anderson,disertori2010quasi}. In~\cite{sabot2011edge} it is a consequence of the fact that it is the probability distribution of the limit random variables \(U\).
In~\cite{disertori2010quasi} it is proved using a sophisticated supersymmetric argument, the so-called localization principle. Theorem~\ref{thm3} below provides a direct 'computational' proof of that result,  based on the identity (\ref{laplace}) and on the change of variable in Proposition \ref{p:green} that relates the field \((u_j)\) to the random vector \((\beta_j)\) in Definition \ref{def_exp_fam}.
\subsection{Link with the random potential \(\beta\)}
The second main result of this paper enables us to construct the mixing field \(e^u\) defined  in the previous subsection from the random potential
\((\beta_j)\) defined in Definition~\ref{def_exp_fam}. It gives also a natural way to couple the mixing measure of VRJP starting from different points.

Let us first state the following Proposition \ref{p:green}, which provides some elementary observations on the Green function.

Define
\[
\ddd=\{ (\beta_i)_{i\in V} \in (\R_+\setminus\{0\})^V,\; \; 2\beta -P>0\}.
\]

\begin{prop}
\label{p:green}
Let \(\be\in\ddd\), and let \(G\) be the inverse of \((2\beta-P)\). Then \((G(i,j))\) has positive coefficients. Define \((u(i,j))_{i,j\in V}\) by
\[
e^{u(i,j)}={G(i,j)\over G(i,i)}.
\]
Then for \(i_0\in V\), the function \(j\rightarrow u(i_0,j)\) is the unique solution \(j\mapsto u_j\) of the equation
\begin{equation}
\label{system}
\begin{cases}
\sum_{j\sim i}\frac{1}{2} W_{i,j}e^{u_j-u_i}=\beta_i, & i\neq i_0
\\
u_{i_{0}}=0, & 
\end{cases}
\end{equation}
In particular \((u(i_0,j))_{j\in V}\) is \((\beta_j)_{j\in V\setminus\{i_0\}}\) measurable. Moreover, at the site \(i_0\) we have 
\[
\beta_{i_0}= {1\over 2G(i_0,i_0)} +\sum_{j: j\sim i_{0}} \frac{1}{2}W_{i_0,j}e^{u(i_{0},j)}.
\]
\end{prop}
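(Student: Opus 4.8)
The plan is to verify the three claims in turn, using only linear algebra and the explicit formula $G=(2\beta-P)^{-1}$ together with the hypothesis $2\beta-P>0$ and $\beta\in\ddd$ (so in particular each $\beta_i>0$).

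First, positivity of the Green function. Since $2\beta-P>0$, its inverse $G$ is positive definite, so the diagonal entries $G(i,i)$ are strictly positive. For the off-diagonal entries, I would invoke a Perron--Frobenius / $M$-matrix argument: the matrix $2\beta-P$ has non-positive off-diagonal entries (they are $-W_{i,j}\le 0$) and is positive definite, hence it is a (symmetric) nonsingular $M$-matrix, and the inverse of such a matrix has all entries $\ge 0$; connectedness of $\mathcal G$ then upgrades this to strictly positive entries. Alternatively, one can write $G$ via the Neumann-type series $(2\beta-P)^{-1}=\sum_{k\ge0}(2\beta)^{-1}\big(P(2\beta)^{-1}\big)^k$, valid because the spectral radius of $(2\beta)^{-1/2}P(2\beta)^{-1/2}$ is less than $1$ (this is exactly the statement $2\beta-P>0$ combined with $2\beta-P\le 2\beta$ on the relevant subspace), and observe that every term in the series is entrywise nonnegative, with the connectedness guaranteeing a strictly positive contribution to each entry. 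Either way $e^{u(i,j)}$ is well defined and $u(i_0,i_0)=0$ is immediate.

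Next, the characterisation via the system \eqref{system}. The key identity is simply $(2\beta-P)G=\mathrm{Id}$ read column by column: fixing the column indexed by $i_0$, for every $i\neq i_0$ the $(i,i_0)$ entry of $(2\beta-P)G$ is $0$, i.e.
\[
2\beta_i\,G(i,i_0)-\sum_{j\sim i}W_{i,j}\,G(j,i_0)=0 .
\]
Dividing through by $2G(i_0,i_0)$ and then by $G(i,i_0)$ (legitimate since all Green entries are strictly positive) and using $e^{u(i_0,j)}=G(j,i_0)/G(i_0,i_0)=G(i_0,j)/G(i_0,i_0)$ by symmetry, this rearranges to $\sum_{j\sim i}\tfrac12 W_{i,j}e^{u_j-u_i}=\beta_i$, which is the first line of \eqref{system}; the second line $u_{i_0}=0$ holds by definition. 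For the $i_0$ row of $(2\beta-P)G=\mathrm{Id}$ one instead gets the $(i_0,i_0)$ entry equal to $1$:
\[
2\beta_{i_0}G(i_0,i_0)-\sum_{j\sim i_0}W_{i_0,j}G(j,i_0)=1,
\]
and dividing by $2G(i_0,i_0)$ yields exactly $\beta_{i_0}=\tfrac{1}{2G(i_0,i_0)}+\sum_{j\sim i_0}\tfrac12 W_{i_0,j}e^{u(i_0,j)}$, the last displayed formula.

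It remains to prove uniqueness of the solution of \eqref{system}, and this is the one step requiring a genuine argument rather than a rewriting. Given a solution $(u_j)$ with $u_{i_0}=0$, set $g_j:=e^{u_j}>0$; the first line of \eqref{system} becomes $2\beta_i g_i-\sum_{j\sim i}W_{i,j}g_j=0$ for $i\neq i_0$, i.e. the vector $g=(g_j)$ satisfies $\big((2\beta-P)g\big)_i=0$ for all $i\neq i_0$ and $g_{i_0}=1$. If $g,g'$ are two such vectors, their difference $h:=g-g'$ satisfies $\big((2\beta-P)h\big)_i=0$ for $i\neq i_0$ and $h_{i_0}=0$; pairing with $h$ gives $\langle h,(2\beta-P)h\rangle=h_{i_0}\big((2\beta-P)h\big)_{i_0}=0$, and positive definiteness of $2\beta-P$ forces $h=0$. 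Thus the solution in the $g$-variable, hence in the $u$-variable, is unique; and since $j\mapsto G(j,i_0)/G(i_0,i_0)$ is one such solution by the computation above, it is \emph{the} solution. Finally, $(u(i_0,j))_j$ is a function of $G$, which is a function of $(2\beta-P)$, hence of $(\beta_j)_{j\in V}$; but the system \eqref{system} involves $\beta_i$ only for $i\neq i_0$, so by the uniqueness just established $(u(i_0,j))_j$ is in fact measurable with respect to $(\beta_j)_{j\in V\setminus\{i_0\}}$ alone, which completes the proof. The main obstacle is nothing deep here — it is just making sure the uniqueness argument is phrased in the linearising variable $g=e^u$ rather than in $u$ itself, where the equation is nonlinear.
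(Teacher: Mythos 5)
Your proposal is correct and follows essentially the same route as the paper: linearize the system through $g_j=e^{u_j}$, read the relations off $(2\beta-P)G=\mathrm{Id}$, and invoke the $M$-matrix structure for strict positivity of $G$. The only point of departure is the uniqueness step: the paper observes directly that any solution $g$ satisfies $(2\beta-P)g=c\,\delta_{i_0}$, hence $g=c\,G(\cdot,i_0)$ with $c$ fixed by $g_{i_0}=1$, whereas you subtract two candidate solutions and kill the difference through the quadratic form $\langle h,(2\beta-P)h\rangle=0$ plus positive definiteness. Both arguments are valid and of comparable length; the paper's variant has the mild advantage of producing existence and uniqueness in a single stroke, while yours separates the two but requires no extra machinery.
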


\begin{thm}\label{thm3}
Let \(\beta\) be a random potential with distribution \(\nu^{W,\phi^{2}} (d\beta)\) as in Definition \ref{def_exp_fam}, and let \((u(i,j))_{i,j\in V}\) be defined as in Proposition \ref{p:green}. Then the following properties hold:
\begin{enumerate}[i)]
\item The random field \((u(i_0,j))_{j\in V}\) has the distribution of the mixing measure \(\mathcal{Q}^{W,\phi}_{i_0}(du)\) of the VRJP starting from \(i_0\) with initial local time \((\phi_i)_{i\in V}\).
\item  The random variable \(G(i_0,i_0)\) has the distribution of  \(1/(2\gamma)\), where \(\gamma\) is a gamma random variable with parameters \((1/2,1/\phi_{i_0}^2)\). Moreover,
\(G(i_0,i_0)\) is independent of \((\beta_j)_{j\neq i_0}\), and thus also of the field \((u(i_0,j))_{j\in V}\). \end{enumerate}
\end{thm}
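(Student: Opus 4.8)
\emph{Plan of proof.} The natural approach is a single explicit change of variables in the density of $\nu^{W,\phi^2}$, passing from $(\beta_i)_{i\in V}$ to $\big((u_j)_{j\in V\setminus\{i_0\}},\,\gamma\big)$, where $u_j:=u(i_0,j)$ and $\gamma:=1/(2G(i_0,i_0))$. By Proposition~\ref{p:green} this map is well defined on $\ddd$ and is inverted by the explicit formulas
\[\beta_i=\sum_{j\sim i}\tfrac12 W_{i,j}e^{u_j-u_i}\quad(i\neq i_0,\ \text{with}\ u_{i_0}:=0),\qquad \beta_{i_0}=\gamma+\sum_{j\sim i_0}\tfrac12 W_{i_0,j}e^{u_j}.\]
First I would check that these formulas define a diffeomorphism of $\R^{V\setminus\{i_0\}}\times\R_+^*$ onto $\ddd$. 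Positivity of all $\beta_i$ is immediate on a connected graph; the condition $2\beta-P>0$, together with the value $G(i_0,i_0)=1/(2\gamma)$, follows from the algebraic identity
\[\Lambda\,(2\beta-P)\,\Lambda \;=\; L+2\gamma\,E_{i_0,i_0},\qquad \Lambda:=\mathrm{diag}\big(e^{u_i}\big)_{i\in V},\]
(which is equivalent to the inverse formulas above), where $L$ is the graph Laplacian of the conductances $\widehat W_{i,j}:=W_{i,j}e^{u_i+u_j}$ and $E_{i_0,i_0}$ is the elementary matrix with a single $1$ at $(i_0,i_0)$: the right-hand side is positive definite on a connected graph, and using $L\mathbf 1=0$ one reads off its $(i_0,i_0)$ inverse entry as $1/(2\gamma)$.

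Granting this, the computation rests on three ingredients. (a) In the new coordinates,
\[-\langle\phi^2,\beta\rangle+\sum_{\{i,j\}\in E}W_{i,j}\phi_i\phi_j \;=\; -\phi_{i_0}^2\gamma-\tfrac12\sum_{\{i,j\}\in E}W_{i,j}\big(\phi_i^2e^{u_j-u_i}+\phi_j^2e^{u_i-u_j}-2\phi_i\phi_j\big).\]
(b) From the identity above and multilinearity of the determinant in the $i_0$-th row, $|2\beta-P|=e^{-2\sum_{i\neq i_0}u_i}\,2\gamma\,\det L^{(i_0)}$; and by the weighted matrix--tree theorem $\det L^{(i_0)}=\sum_T\prod_{\{i,j\}\in T}\widehat W_{i,j}=D(W,u)$. (c) The Jacobian of the inverse map is block-triangular with respect to the splitting of $\gamma$ from the $u_j$'s, hence equals $\big|\det(\partial\beta_i/\partial u_j)_{i,j\neq i_0}\big|$; left-multiplying this matrix by $\mathrm{diag}(e^{2u_i})_{i\neq i_0}$ turns it into $-\tfrac12 L^{(i_0)}$, so the Jacobian is $2^{-(|V|-1)}e^{-2\sum_{i\neq i_0}u_i}\,D(W,u)$.

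Multiplying the density of $\nu^{W,\phi^2}$ by the Jacobian of (c), the powers $e^{\pm u_i}$ and the factors $D(W,u)$ combine so that the image measure factorizes as the product of the $\mathcal{Q}^{W,\phi}_{i_0}(du)$ integrand (in the variables $u_j$, $j\neq i_0$) and a term proportional to $\gamma^{-1/2}e^{-\phi_{i_0}^2\gamma}\,d\gamma$. The multiplicative constants can be tracked directly; alternatively, since $\nu^{W,\phi^2}$ has total mass $1$ (Theorem~\ref{thm_main}) and $\int_0^\infty\gamma^{-1/2}e^{-\phi_{i_0}^2\gamma}\,d\gamma=\sqrt\pi/\phi_{i_0}$, both constants are forced — and one recovers, as a by-product, that $\mathcal{Q}^{W,\phi}_{i_0}$ is a probability measure. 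This gives (i). The factorization also shows that $\gamma$ is a $\mathrm{Gamma}(1/2,1/\phi_{i_0}^2)$ variable independent of $(u_j)_j$, i.e.\ $G(i_0,i_0)=1/(2\gamma)$ has the stated law and is independent of the field $(u(i_0,j))_{j\in V}$; and independence of $G(i_0,i_0)$ from $(\beta_j)_{j\neq i_0}$ follows because $(\beta_j)_{j\neq i_0}$ and $(u_j)_{j\neq i_0}$ are measurable functions of one another through system~\eqref{system}. This yields (ii).

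The hard part is the pair of determinant identities in (b)--(c): seeing that the diagonal conjugation $2\beta-P\mapsto\Lambda(2\beta-P)\Lambda$ turns the Schrödinger operator into the ``grounded'' conductance Laplacian $L+2\gamma E_{i_0,i_0}$, and that the Jacobian matrix of the change of variables coincides, up to that same conjugation and a factor $-\tfrac12$, with the reduced Laplacian $L^{(i_0)}$ whose determinant the matrix--tree theorem evaluates to $D(W,u)$. Once this structural correspondence is in place, the remaining work is routine bookkeeping with exponential factors and numerical constants.
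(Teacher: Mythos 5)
Your proof is correct and follows the same change-of-variables strategy as the paper's first proof: pass from $\beta$ to $(u,\gamma)$ via $\Phi^{-1}$, compute the Jacobian and $|2\beta-P|$ in the new coordinates, and identify the pushforward density as $\mathcal{Q}^{W,\phi}_{i_0}\otimes\Gamma(\tfrac12,1/\phi_{i_0}^2)$. Your conjugation identity $\Lambda(2\beta-P)\Lambda = L + 2\gamma E_{i_0,i_0}$ is simply a cleaner packaging of the paper's row factorizations by $e^{-2u_i}$, letting the matrix--tree step, the value $G(i_0,i_0)=1/(2\gamma)$, and the positivity $2\beta-P>0$ all be read off at once (the last replacing the M-matrix criterion invoked in the paper's proof of Proposition~\ref{p:green}); otherwise the computational content is identical.
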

The proofs of Proposition \ref{p:green} and Theorem \ref{thm3} are given in Section \ref{s_proof_thm3}. The next Corollary \ref{coro-thm3} describes how to construct the random potential \(\beta\) from the field \(u\) of Theorem~\ref{thm-ST}.
\begin{coro}
\label{coro-thm3}
  Consider a VRJP with edge weight \((W_{i,j})\) and initial local time \((\phi_i)_{i\in V}\), 
starting at \(i_0\). Let \((u_i)_{i\in V}\) be distributed according to \(\mathcal{Q}_{i_0}^{W,\phi}\) of
  Theorem~\ref{thm-ST}. Let
  \begin{equation}
  \label{def:tbe}
 \tilde{\beta}_i=\frac{1}{2}\sum_{j:j\sim
    i}W_{i,j}e^{u_j-u_i}.
\end{equation}

Let \(\gamma\) be a Gamma distributed random
  variable with parameters \((\frac{1}{2},1/\phi_{i_0}^2)\), independent of \((u_j)\), and let
  \begin{equation}
  \label{nbe}
  \beta_i=\tilde{\beta}_i+\mathds{1}_{i_0}\gamma.
  \end{equation}
  Then \(\beta\) has the law \(\nu^{W,\phi^{2}}\) of Definition \ref{def_exp_fam}.
\end{coro}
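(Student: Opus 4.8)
The statement is the reciprocal of Theorem~\ref{thm3}, and the plan is to deduce it from Theorem~\ref{thm3} together with Proposition~\ref{p:green} by a pushforward-of-measures argument, with essentially no new computation.

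First I would turn the correspondence of Proposition~\ref{p:green} into an explicit map. For \(\beta\in\ddd\) set
\(\Phi(\beta)=\big((u(i_0,j))_{j\in V\setminus\{i_0\}},\,1/(2G(i_0,i_0))\big)\in \R^{V\setminus\{i_0\}}\times(0,\infty)\),
which by Proposition~\ref{p:green} is well defined and measurable: the \(u(i_0,j)\) are the unique solution of~(\ref{system}), and \(G(i_0,i_0)>0\) is a rational function of \(\beta\) on \(\ddd\). On the other side, write \(\Psi\big((u_j)_{j\neq i_0},\gamma\big)\) for the vector \(\beta\) defined by~(\ref{def:tbe})--(\ref{nbe}), with the convention \(u_{i_0}=0\); this is the map whose image law we must identify. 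The key elementary point is the \emph{pointwise} identity \(\Psi\circ\Phi=\mathrm{id}\) on \(\ddd\): for \(i\neq i_0\) the first equation of~(\ref{system}) gives \(\tilde\beta_i=\tfrac12\sum_{j\sim i}W_{i,j}e^{u(i_0,j)-u(i_0,i)}=\beta_i\), while at \(i_0\) the last displayed identity of Proposition~\ref{p:green} gives \(\tilde\beta_{i_0}+1/(2G(i_0,i_0))=\beta_{i_0}\).

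Next I would read off the joint law of \(\Phi(\beta)\) when \(\beta\sim\nu^{W,\phi^{2}}\) from Theorem~\ref{thm3}. Part~(i) gives \((u(i_0,j))_{j\in V}\sim\mathcal{Q}^{W,\phi}_{i_0}\). Part~(ii) gives \(G(i_0,i_0)\) distributed as \(1/(2\gamma)\) with \(\gamma\sim\mathrm{Gamma}(1/2,1/\phi_{i_0}^2)\), hence \(1/(2G(i_0,i_0))\sim\mathrm{Gamma}(1/2,1/\phi_{i_0}^2)\); and since \(G(i_0,i_0)\) is independent of \((\beta_j)_{j\neq i_0}\), and \((u(i_0,j))_{j\in V}\) is \((\beta_j)_{j\neq i_0}\)-measurable by Proposition~\ref{p:green}, it is also independent of \((u(i_0,j))_{j\in V}\). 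Therefore \(\Phi_*\nu^{W,\phi^{2}}=\mathcal{Q}^{W,\phi}_{i_0}\otimes\mathrm{Gamma}(1/2,1/\phi_{i_0}^2)\).

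Finally I would conclude: since \(\Psi\circ\Phi=\mathrm{id}\) on \(\ddd\) and \(\nu^{W,\phi^{2}}\) is supported on \(\ddd\),
\[\Psi_*\big(\mathcal{Q}^{W,\phi}_{i_0}\otimes\mathrm{Gamma}(1/2,1/\phi_{i_0}^2)\big)=\Psi_*\big(\Phi_*\nu^{W,\phi^{2}}\big)=(\Psi\circ\Phi)_*\nu^{W,\phi^{2}}=\nu^{W,\phi^{2}},\]
which is exactly the assertion of the corollary, since the left-hand side is the law of \(\beta\) defined by~(\ref{nbe}). I do not expect a genuine obstacle: the only care needed is the bookkeeping in the previous paragraph — translating the independence and the ``\(1/(2\gamma)\)'' normalization of Theorem~\ref{thm3}(ii) into the joint law of \(\Phi(\beta)\). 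A more pedestrian alternative would be a direct change of variables between \((\beta_j)_{j\in V}\) and \(\big((u_j)_{j\neq i_0},\gamma\big)\) using the explicit densities in Definition~\ref{def_exp_fam} and~(\ref{density_u}) and the Gamma density, but the pushforward argument avoids computing the Jacobian and re-deriving identities already contained in Theorem~\ref{thm3}.
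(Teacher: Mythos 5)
Your proposal is correct and is essentially the paper's own argument: the paper also deduces the corollary by observing that the map $\beta\mapsto(u,\gamma)$ of Proposition~\ref{p:green} inverts the map $(u,\gamma)\mapsto\beta$ of~\eqref{nbe}, and that by Theorem~\ref{thm3} the image of $\nu^{W,\phi^2}$ under the former is $\mathcal{Q}^{W,\phi}_{i_0}\otimes\Gamma(1/2,1/\phi_{i_0}^2)$. You simply make the pushforward bookkeeping (the maps $\Phi$, $\Psi$ and the identity $\Psi\circ\Phi=\mathrm{id}$) more explicit than the paper's terse one-sentence justification.
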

Corollary \ref{coro-thm3} indeed follows directly from Theorem \ref{thm3} and Proposition \ref{p:green}: the law of \(\be\) in \eqref{nbe} is uniquely determined by the laws of \((u_i)_{i\in V}\) and \(\g\) independent from \((\be_i)_{i\ne i_0}\), hence it is sufficient to show that, if \(\be\) has distribution \(\nu^{W,\phi^{2}} (d\beta)\) and \(u\) is defined from \eqref{system} by Proposition \ref{p:green}, then \((u_i)_{i\in V}\) indeed has distribution \(\mathcal{Q}_{i_0}^{W,\phi}\), and \(\g=\beta_{i_0}-\tilde{\beta}_{i_0}=1/(2G(i_0,i_0))\) has distribution \(\G(1/2,1/\phi_{i_0}^2)\), which follows from Theorem \ref{thm3}.

As mentioned in the introduction, Theorem~\ref{thm3} has several consequences.
Firstly it explicitly relates the VRJP to the random Schr\"odinger operator \(-\Delta^W+V\), where \(V\) is the random potential \(V_i=2\beta_i-W_i\). Secondly it yields a natural coupling between the random fields \((u_j)_{j\in V}\) associated with the VRJP starting from different sites, since the exponential family \((\beta_i)_{i\in V}\) gives the same role to each vertex of the graph, and 
\((u(i,j))_{i,j\in V}\) arises from these random variables \((\beta_i)_{i\in V}\).
Finally it also gives a computational proof of the identity \(\int \mathcal{Q}^{W,\theta}_{i_0}(du )=1\), for any \(\theta\), as a consequence of
Theorem~\ref{thm_main} that allows to define \(\nu^{W,\phi^{2}} (d\beta)\) as a probability measure.
\section{Link with the Edge reinforced random walk and a question of Diaconis}
\label{sec_rw}

\subsection{Definition and magic formula}

The Edge Reinforced Random Walk (ERRW) is a famous discrete time process introduced in 1986 by Coppersmith and Diaconis,~\cite{coppersmith1987random}.

Let \((a_{i,j})_{\{i,j\}\in E} \) be a set of positive weights on the edges of the graph \(\mathcal G\).
Let \((X_n)_{n\in\mathbb{N}}\) be a random process that takes values in \(V\), and let 
\(\mathcal{F}_n=\sigma(X_0,\ldots,X_n)\) be the filtration of its past. For any \(e\in E\), \(n\in\mathbb{N}\), let 
\begin{equation}\label{Zn}
Z_n(e)=a_e+ \sum_{k=1}^n\mathds{1}_{\{\{X_{k-1},X_k\}=e\}}
\end{equation}
be the number of crosses of the (non-directed) edge \(e\) up to time \(n\) plus the initial weight \(a_e\).

Then \((X_n)_{n\in\mathbb{N}}\) is called Edge Reinforced Random Walk (ERRW) with starting point \(i_0\in V\) and weights \((a_e)_{e\in E}\),  
if  \(X_0=i_0\) and,  for all \(n\in\mathbb{N}\), 
\begin{equation}
\label{def-errw}
\mathbb{P}(X_{n+1}=j~|~\mathcal{F}_n)=\mathds{1}_{\{j\sim X_n\}}\frac{Z_n(\{X_n,j\})}
{\sum_{k\sim X_n} Z_n(\{X_n,k\})}.
\end{equation}
We denote by \(\mathbb{P}^{ERRW,(a)}_{i_0}\) the law of the ERRW starting from the initial vertex \(i_0\) and initial weights \((a)\).

A fundamental property of the ERRW, stated in the next theorem, 
is that on finite graphs the ERRW is a mixture of reversible Markov chains, and the mixing measure can be determined explicitly 
(the so-called Coppersmith-Diaconis measure, or `magic formula'). It is a consequence of a de Finetti theorem for Markov chains due
to Diaconis and Freedman~\cite{diaconis1980finetti}, and the explicit determination of the law
is due to Diaconis and Coppersmith \cite{coppersmith1987random, rolles2000edge,merkl2008magic}. It has also applications in Bayesian statistics~\cite{bacallado2011bayesian,bacallado2009bayesian,diaconis2006bayesian}.

\begin{thm}{\cite{coppersmith1987random, rolles2000edge}}\label{thm3_cd}

Assume that \(\mathcal{G}=(V,E)\) is a finite graph and  set \(a_i=\sum_{j: \{i,j\} \in E}a_{i,j}\) for all \(i\in V\). Fix an edge
\(e_0\) incident to \(i_0\), and define \(\mathcal{H}_{e_0}=\{y: \,\forall e\in E,\ y_e>0,\
y_{e_0}=1\}\) (similarly let \(y_i=\sum_{i\in
  e}y_e\)).
Consider the following positive measure
defined on \(\mathcal{H}_{e_0}\) defined 
  by its density
\begin{align}
  \label{magic_formula}
  \mathcal{M}^{(a)}_{i_0}(dy)=C(a,i_0) \frac{\sqrt{y_{i_0}}\prod_{e\in
      E}y_e^{a_e}}{\prod_{i\in
      V}y_i^{\frac{1}{2}(a_i+1)}}\sqrt{D(y)}\prod_{e\neq e_0}\frac{d
    y_e}{y_e},
\end{align}
with 
\[D(y)=\sum_T \prod_{e \in T} y_e,
\] 
where the sum runs on the set of spanning trees \(T\) of \(\mathcal{G}\), and with
\[
C(a,i_0)= \frac{2^{1-|V|+\sum_{e\in
        E}a_e}}{\sqrt{\pi}^{|V|-1}}\cdot \frac{\prod_{i\in
      V}\Gamma(\frac{1}{2}(a_i+1-\mathds{1}_{i=i_0}))}{\prod_{e\in
      E}\Gamma(a_e)}
\]
Then \(\mathcal{M}^{(a)}_{i_0}\) is a probability measure on \(\mathcal{H}_{e_0}\), and it is the mixing measure of the ERRW
starting from \(i_0\), more precisely
\[
\P^{ERRW,(a)}_{i_0}(\cdot)=\int_{\mathcal{H}_{i_0}} P^{(y)}_{i_0}(\cdot)d
\mathcal{M}^{(a)}_{i_0}(y),
\]
where \(P^{(y)}_{i_0}\) denotes the reversible Markov chain starting at
\(i_0\) with conductances \((y)\).
\end{thm}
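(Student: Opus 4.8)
The plan is to derive the statement from Theorem~\ref{thm3} (and hence, indirectly, from Theorem~\ref{thm_main}) via the classical reduction of the ERRW to the VRJP established in~\cite{sabot2011edge}. First I recall that an ERRW with initial edge weights $(a_e)$ can be realized as a VRJP in a random environment: one draws independent Gamma random variables governing the conductances $W_{i,j}$ with shape parameters $a_{i,j}$, and then the discrete-time trace of the resulting time-changed VRJP $Z$ is exactly the ERRW. Concretely, in~\cite{sabot2011edge} it is shown that $\P^{ERRW,(a)}_{i_0}$ is obtained by integrating the VRJP law against the product measure $\prod_{\{i,j\}\in E}\frac{W_{i,j}^{a_{i,j}-1}e^{-W_{i,j}}}{\Gamma(a_{i,j})}\,dW_{i,j}$. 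Combining this with the statement of Theorem~\ref{thm-ST}, the ERRW is a mixture of reversible Markov chains whose conductances are $\frac12 W_{i,j}e^{u_i+u_j}$, where, conditionally on $W$, the field $(u_j)$ has law $\mathcal{Q}^{W}_{i_0}$.

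The second step is to compute the law of these conductances after integrating out $W$. Using Theorem~\ref{thm3}, I would replace the pair $(W,u)$ by the random potential $\beta\sim\nu^{W}$ and the Green's function $G=(2\beta-P)^{-1}$, for which $e^{u(i_0,j)}=G(i_0,j)/G(i_0,i_0)$. The conductances of the mixed Markov chain become $y_{\{i,j\}}=\frac12 W_{i,j}G(i_0,i)G(i_0,j)/G(i_0,i_0)$ (up to the normalization $y_{e_0}=1$). So the task reduces to: take the joint density of $(W,\beta)$ — the product of Gamma densities in $W$ times $\nu^{W}(d\beta)$, whose explicit form is given in Definition~\ref{def_exp_fam} — and push it forward under the change of variables $(W,\beta)\mapsto (y,\text{something})$. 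The point of Proposition~\ref{p:green}, together with part (ii) of Theorem~\ref{thm3}, is that $\beta$ factorizes as the field $u$ (a function of $(\beta_j)_{j\neq i_0}$) together with an independent Gamma variable $\gamma=1/(2G(i_0,i_0))$; the latter, and one auxiliary scale, absorb the extra degrees of freedom so that the marginal law of $y$ is well defined on $\mathcal{H}_{e_0}$.

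The third step is the bookkeeping: carrying out the change of variables, one must track the Jacobian relating $d\beta\,dW$ to $dy\,d\gamma$ (and the scale), recognize the spanning-tree polynomial $D(W,u)=\sum_T\prod_{\{i,j\}\in T}W_{i,j}e^{u_i+u_j}$ from Theorem~\ref{thm-ST} as transforming into $D(y)=\sum_T\prod_{e\in T}y_e$, collect the powers of $y_e$ and $y_i$ coming from the Gamma densities $W_{i,j}^{a_{i,j}-1}$ and from the factor $\prod_i\sqrt{\theta_i}/\sqrt{|2\beta-P|}$ evaluated via $|2\beta-P|=\prod_i(2\beta_i)/D(W,u)\cdot(\dots)$, and integrate the Gamma variable $\gamma$ using $\int_0^\infty \gamma^{s-1}e^{-c\gamma}\,d\gamma=\Gamma(s)c^{-s}$. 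Matching exponents should produce precisely the density $\mathcal{M}^{(a)}_{i_0}(dy)$ with $D(y)$ and, crucially, with the normalizing constant $C(a,i_0)$: the $2$-powers come from the $\frac12$'s in the conductances and in $W_{i,j}/2$, the $\prod_i\Gamma(\frac12(a_i+1-\mathds{1}_{i=i_0}))$ from integrating out $\gamma$ and from the marginal Gamma/inverse-Gaussian structure of the $\beta_i$, and the $\prod_e\Gamma(a_e)$ from normalizing the input Gamma priors on $W$. That $\mathcal{M}^{(a)}_{i_0}$ is a \emph{probability} measure then follows from the fact that $\P^{ERRW,(a)}_{i_0}$ is a probability measure (equivalently, from $\int\mathcal{Q}^{W,\phi}_{i_0}=1$ and $\int\nu^{W}=1$), which is exactly the ``direct computational proof'' alluded to after Theorem~\ref{thm3}.

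I expect the main obstacle to be the precise Jacobian and exponent accounting in the change of variables $(W,\beta)\leftrightarrow(y,\gamma)$: one has to be careful that the transformation is a genuine diffeomorphism onto $\mathcal{H}_{e_0}\times\R_+$ (injectivity follows from the uniqueness statement in Proposition~\ref{p:green}, which recovers $u$, hence the $\beta_j$ for $j\neq i_0$, from the $y$'s), and that every $\sqrt{y_{i_0}}$, every $y_i^{-\frac12(a_i+1)}$, and the lone $2$-power in $C(a,i_0)$ is produced rather than merely ``off by a constant.'' Getting $C(a,i_0)$ exactly right — rather than up to an unidentified constant — is the real content, and it is what answers Diaconis's question.
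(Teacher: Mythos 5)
The theorem you are asked about is not actually re-proved in this paper: it is cited as a known result of Coppersmith--Diaconis and Rolles, and what the paper supplies afterwards (in the ``question of Diaconis'' subsection) is a new proof of the \emph{normalization} $\int\mathcal{M}_{i_0}^{(a)}(dy)=1$ together with a short argument deducing the mixing property from that normalization. Your plan is in the right circle of ideas, but it diverges from the paper in both strategy and rigor, and it contains one concrete error.

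On strategy: the paper does not attempt to re-derive the density $\mathcal{M}_{i_0}^{(a)}(dy)$ by pushing forward the joint law of $(W,\beta)$. It instead takes the identity \eqref{eq3} relating $\mathcal{M}_{i_0}^{(a)}$ to $\mathcal{Q}_{i_0}^{W}$ (verified by a direct computation in Section~5 of \cite{sabot2011edge}), combines it with the new fact $\int\mathcal{Q}_{i_0}^{W}(du)=1$ (Theorem~\ref{thm_main} plus Theorem~\ref{thm3}) to obtain $\int\mathcal{M}_{i_0}^{(a)}(dy)=1$ with the explicit $C(a,i_0)$, and then observes that once the constant is known for all~$(a)$, the mixing property follows by a one-line computation: integrating the Markov-chain path probability $p_{i_0}^{y}(\sigma)=\prod_e y_e^{N(e)}/\prod_i y_i^{N(i)}$ against $\mathcal{M}_{i_0}^{(a)}$ simply shifts the Gamma parameters, and $\Gamma(x+1)=x\Gamma(x)$ reproduces the ERRW path probability $\prod_e(a_e,N(e))/\prod_i(a_i,N(i))$. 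Your plan omits this slick step and proposes instead to reconstitute the full density of $\mathcal{M}_{i_0}^{(a)}$ from scratch, which is essentially to redo the [sabot2011edge, \S 5] computation. That is possible, but it is a harder road and is not the one the paper takes.

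On the concrete error: you claim that the transformation $(W,\beta)\leftrightarrow(y,\gamma)$ is ``a genuine diffeomorphism onto $\mathcal{H}_{e_0}\times\mathbb{R}_+$'' with injectivity following from Proposition~\ref{p:green}. This is false. Proposition~\ref{p:green} recovers $u$ from $\beta$ \emph{with $W$ fixed}; it does not let you recover $(W,u)$ from $y$. The map $(W_{i,j},u_j)\mapsto y_{\{i,j\}}=W_{i,j}e^{u_i+u_j}$ (even before the projective normalization $y_{e_0}=1$) has $(|V|-1)$-dimensional fibers: replacing $W_{i,j}\to W_{i,j}e^{-v_i-v_j}$ and $u_j\to u_j+v_j$ (with $v_{i_0}=0$) leaves $y$ unchanged. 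So the law of $y$ is a genuine marginal, not a change of variables, and the core of the computation is precisely integrating out those fibers (together with a scale variable and $\gamma$). Your plan acknowledges ``extra degrees of freedom'' in one sentence and then asserts injectivity in the next; the two are inconsistent, and the assertion is the one that is wrong. Until that integration is carried out, the derivation of $\mathcal{M}_{i_0}^{(a)}$ and of the constant $C(a,i_0)$ is not established. You would do better to follow the paper: invoke the already-verified identity \eqref{eq3}, use Theorems~\ref{thm_main} and \ref{thm3} only to get the total mass~$1$, and then close with the $\Gamma$-shift argument for the mixing property.
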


\subsection{The question of Diaconis}
The fact that \({\mathcal{M}}^{(a)}_{i_0}(dy)\) is a probability measure is a consequence of the fact that
it is the mixing measure of the ERRW.\@ In fact it is obtained as the limit distribution of the normalized occupation time of the edges \cite{rolles2000edge}:
\[
\left({Z_n(e)\over Z_n(e_0)}\right)_{e\in E}\overset{law}{\longrightarrow} \mathcal{M}^{(a)}_{i_0}.
\]
One question raised by Diaconis is the following

\begin{equation}
\label{q-diaconis}
\hbox{(Q)  Prove by direct computation that \(\int\mathcal{M}^{(a)}_{i_0} (dy) =1\).}
\end{equation}
An answer was proposed by Diaconis and Stong \cite{Diaconis-Stong} in the case of the triangle, using a subtle change of variables.
Also note that Merkl and Rolles offered in \cite{merkl2008magic}  analytic tools for the computation of the ratio of the normalizing constants of the magic formula for two initial weights differing by integer values, which may possibly be extended to provide the normalizing constant.

We provide below an answer to that question. A first simplification comes from~\cite{sabot2011edge}, where an explicit link was made between the VRJP and the ERRW.
\begin{thm}[Theorem 1, \cite{sabot2011edge}] 
Consider \((Y_n)\) the discrete time process associated with the VRJP \((Y_t)\) (i.e.\ taken at jump times) with conductances \((W_{i,j})\) and \(\phi=1\).
Take now the conductances \((W_e)_{e\in E}\) as independent random variables with 
gamma distribution with parameters \((a_e)_{e\in E}\). Then the `annealed' law of \(Y_n\) (i.e.\ the law after taking expectation with respect to 
the random \((W_e)\)) is the law of the ERRW \((X_n)\) with initial weights \((a_e)_{e\in E}\).
\end{thm}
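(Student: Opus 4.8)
The plan is to reduce the stated identity relating the annealed law of the VRJP jump chain to the ERRW to two ingredients already developed in the paper: the mixing representation of the VRJP from Theorem~\ref{thm-ST} and a conditioning/integration argument that turns the Gamma-distributed conductances into the Dirichlet-type weights appearing in the magic formula. Concretely, I would first recall from Theorem~\ref{thm-ST} that, conditionally on the field \((U_j)\) (equivalently on the conductances \((W_{i,j}e^{U_j-U_i})\)), the time-changed process \(Z\) is a reversible Markov jump chain, so the VRJP jump chain \((Y_n)\) is itself a mixture of reversible discrete-time Markov chains with conductances \(c_{i,j}=W_{i,j}e^{U_i+U_j}\) (the \(e^{U_i+U_j}\) normalization being harmless since only ratios \(c_{i,j}/c_{i,k}\) enter the transition probabilities). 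Thus for \emph{fixed} \(W\) the law of \((Y_n)\) is a mixture \(\int P^{(c)}_{i_0}(\cdot)\,d\mu_W(c)\) over conductances \(c\).

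The heart of the matter is then to identify the annealed mixing measure. Taking \((W_e)\) independent with \(\mathrm{Gamma}(a_e)\) law and integrating, the annealed law of \((Y_n)\) becomes a mixture of reversible Markov chains whose mixing measure is the push-forward, under \((W,U)\mapsto (W_{i,j}e^{U_i+U_j})\), of the product of \(\prod_e \mathrm{Gamma}(a_e)(dW_e)\) with \(\mathcal{Q}^{W}_{i_0}(du)\). I would compute this push-forward explicitly: writing \(y_{i,j}=W_{i,j}e^{U_i+U_j}\) and using the density \eqref{density_u} for \(\mathcal{Q}^{W}_{i_0}\) together with the explicit Gamma densities, one performs the change of variables from \((W_e)_{e\in E}, (u_j)_{j\ne i_0}\) to \((y_e)_{e\in E}\) normalized by \(y_{e_0}=1\) plus a leftover scale variable. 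The spanning-tree polynomial transforms as \(D(W,u)=\big(\prod_i e^{U_i}\big)\,D(y)\) up to bookkeeping, the factors \(W_{i,j}(e^{u_i-u_j}+e^{u_j-u_i})\) in the exponent of \eqref{density_u} recombine with \(\sum_j u_j\) and the Gamma exponents \(W_e^{a_e-1}e^{-W_e}\) to reproduce, after integrating out the scale, exactly the density \(\mathcal{M}^{(a)}_{i_0}(dy)\) of \eqref{magic_formula}, including the constant \(C(a,i_0)\) with its product of Gamma functions arising from the scale integrals. Finally, invoking Theorem~\ref{thm3_cd}, the ERRW with initial weights \((a_e)\) has precisely this same mixing measure \(\mathcal{M}^{(a)}_{i_0}\), so the annealed VRJP jump chain and the ERRW have identical laws.

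The main obstacle I anticipate is the change of variables and the careful tracking of Jacobians and constants: one must correctly separate the \(|E|\)-dimensional \((W_e)\) together with the \((|V|-1)\)-dimensional \((u_j)_{j\ne i_0}\) into the \((|E|-1)\)-dimensional \((y_e)_{e\ne e_0}\) and the remaining scale directions, integrate out the scales against the Gamma factors to produce the \(\Gamma(\tfrac12(a_i+1-\mathds{1}_{i=i_0}))/\Gamma(a_e)\) structure, and verify the exponents of \(y_e\) and \(y_i\) match \eqref{magic_formula} exactly. A cleaner route, which I would prefer if it goes through, is to avoid recomputing constants by noting that both sides are probability measures on path space by construction—the VRJP side because \(\mathcal{Q}^{W}_{i_0}\) is a probability measure (Theorem~\ref{thm-ST}) and the Gamma laws are normalized, the ERRW side by Theorem~\ref{thm3_cd}—so it suffices to match the \emph{un-normalized} densities up to a constant and then conclude equality of constants automatically; the only real work is then the algebraic identity between the integrands, which is where \eqref{density_u} and the Gamma densities must be multiplied and simplified.

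Alternatively, since this theorem is quoted verbatim as Theorem~1 of~\cite{sabot2011edge}, the shortest honest ``proof'' is simply to cite that reference; but the sketch above indicates how it follows from the representation results assembled in the present paper, which is the perspective relevant here.
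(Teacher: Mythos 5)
The paper does not prove this statement; it is quoted verbatim as Theorem 1 of \cite{sabot2011edge}, and the ``honest proof'' you mention at the end—citing that reference—is exactly what the authors do. So there is no in-paper proof to compare against, but it is still worth noting how your sketched route differs from the cited one.

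Your main route goes through the mixing measures: write the annealed VRJP jump chain as a mixture of reversible chains with mixing measure the push-forward of \(\bigl(\prod_e \mathrm{Gamma}(a_e)\bigr)\otimes\mathcal{Q}^W_{i_0}\) under \((W,u)\mapsto (W_{i,j}e^{u_i+u_j})\), compute that push-forward explicitly, match it to \(\mathcal{M}^{(a)}_{i_0}\), and invoke Theorem~\ref{thm3_cd}. This is not how Theorem~1 of \cite{sabot2011edge} is actually proved. The original proof is a direct computation of the annealed probability of a finite path \(\sigma\) for the VRJP jump chain (integrating out the holding times and then the Gamma-distributed \(W_e\)) and an identification with the corresponding ERRW path probability \(\prod_e(a_e,N(e))/\prod_i(a_i,N(i))\); it does not go through the mixing measure of either process and in particular makes no use of the Coppersmith--Diaconis magic formula. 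The change-of-variables identity you sketch is precisely equation~\eqref{eq3} of the present paper, and, as the authors remark, it was ``checked by direct computation in section~5 of~\cite{sabot2011edge}''—as a consequence of Theorem~1 there, not as its proof.

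There is one further caveat to your route. Within the logical flow of the present paper, Theorem~1 of~\cite{sabot2011edge} is the tool used (together with~\eqref{eq3} and the normalization \(\int\mathcal{Q}^W_{i_0}(du)=1\) coming from Theorems~\ref{thm_main} and~\ref{thm3}) to derive \(\int\mathcal{M}^{(a)}_{i_0}(dy)=1\) and hence answer Diaconis's question~(Q). If instead you prove Theorem~1 by invoking Theorem~\ref{thm3_cd}—including its normalization, which has to be imported from \cite{coppersmith1987random,rolles2000edge}—you reverse the dependency graph and can no longer reuse your argument to re-derive the Diaconis constant, which is one of the points of Section~\ref{sec_rw}. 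This is not a logical gap in an absolute sense (Theorem~\ref{thm3_cd} has its own independent proof), but it is a genuinely different and, in this context, less economical route than the direct path-probability computation of~\cite{sabot2011edge}. Your fallback (citing \cite{sabot2011edge}) is the correct disposition here.
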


This immediately implies an identity between the mixing measures \(\mathcal{M}^{(a)}_{i_0}\) and \(\mathcal{Q}^{W}_{i_0}\): indeed, by
Theorem~\ref{thm-ST}, \((Y_n)\) is a mixture of Markov jump processes with conductances \(W_{i,j}e^{u_i+u_j}\), which implies that
for all 0-homogeneous bounded test functions \(\phi\) (i.e. \(\phi(\lambda y)=\phi(y)\), \(\forall \lambda>0\)), we have
\begin{equation}
\label{eq3}
\int_{\mathcal{H}_{e_0}} \phi((y_e)) \mathcal{M}^{(a)}_{i_0}(dy)=  \int_{\mathbb{R}^E} \prod_{e\in E} {W_e^{a_e-1}e^{-W_e}\over
\Gamma(a_e)} \left(\int\phi((W_{i,j}e^{u_i+u_j})) \mathcal{Q}^{W}_{i_0}(du)\right)dW.
\end{equation}
with \(dW=\prod_{e\in E} dW_e\). This identity was checked by direct computation in section 5 of~\cite{sabot2011edge}.

Now, the fact that \(\int \mathcal{Q}_{i_0}^{W}(du)=1\) is a consequence of the computation of the integral~(\ref{laplace}) in Theorem~\ref{thm_main}
and the change of variables in Theorem~\ref{thm3}, as explained at the end of Section \ref{sec:vrjp}. Therefore
  \[\int_{y_{e_0}=1} d \mathcal{M}_{i_0}^a(y)=1.\]
  
Note that this fact can be used to prove directly that \({\mathcal M}_{i_0}^a(dy)\) is the mixing measure of the ERRW starting from
initial condition \((a)\) and initial vertex \(i_0\). Indeed, for any finite path \(\sigma:i_0\to i_1\to \cdots\to i_n\), let \(N(i)\) (resp. \(N(e)\)) be the number of times vertex \(i\) (resp. edge \(e\)) is visited (resp. crossed):
\begin{align*}
N(i)&=\vert\{k;\ 0\leq k \leq n-1, \; i_k=i\}\vert\\
N(e)&=\vert \{ k;\ 0\leq k\leq n-1,\; \{i_k,i_{k+1}\}=e\}\vert.
\end{align*}
The probability of \(\sigma\) for the reversible Markov
  chain of conductance \(y\) is
  \[p_{i_0}^y(\sigma)=\frac{\prod_{e\in E}y_e^{N(e)}}{\prod_{i\in
      V}y_i^{N(i)}}\] 
 The integration of \(p_{i_0}^y(\sigma)\) w.r.t.\ \(d \mathcal{M}_{i_0}^a(y)\) can be computed by changing the constant \(\Gamma(a_e)\) 
      to \(\Gamma(a_e+N_e)\) and \(\Gamma(\frac{1}{2}(a_i+1))\) to \(\Gamma(\frac{1}{2}(a_i+1)+N_i)\). Using the property \(\Gamma(x+1)=x\Gamma(x)\) and 
  the notation \((a,n)=\prod_{k=0}^{n-1} (a+k)\), we deduce
  \[\int p_{i_0}^y(\sigma) d \mathcal{M}_{i_0}^a(y)=\frac{\prod_e
    (a_e, N(e))}{\prod_i
    (a_i,N(i))}\] which is the
  probability of an ERRW to follow the path \(\sigma\).
\section{Proof of Theorem~\ref{thm_main}}
\label{sec_proof_th1}

\begin{lem}
\label{lem1}
  Let \(P=(P_{i,j})_{1\leq i,j\leq n}\) be a symmetric matrix with 
  \[P_{i,j}=
  \begin{cases}
    0, & i=j,\\
    W_{i,j},\in \mathbb{R}^+ & i\neq j,
  \end{cases}\]
 and let  \(\beta\) be a diagonal matrix with entries
   \(\beta_i,\ i=1,\ldots,n \), such that \(M=2\beta-P\) is positive
  definite. 
  
 Let \(L\) be the lower triangular \(n\times n\) matrix and \(U\) be the upper unitary (with \(1\) on the diagonal)  upper triangular matrix such that \(M=LU\) (i.e. the LU decomposition of \(M\)), which exist and are unique.
 
 Then 
 \[U=
  \begin{pmatrix}
    x_1& -H_{1,2}&\cdots&-H_{1,n}\\
    0&x_2&\cdots&-H_{2,n}\\
    &\cdots&&-H_{n-1,n}\\
    0&\cdots&0&x_n
  \end{pmatrix},
  \]
where \((x_i)_{1\le i\le n}\) and \((H_{i,j})_{1\le i<j\le n}\) are defined recursively by
\[ \begin{cases}H_{1,j}=W_{1,j} & j>1 \\
        H_{i,j}=W_{i,j}+\sum_{k=1}^{i-1}\frac{H_{k,i}H_{k,j}}{x_k} & i\geq 2, \ j>i\\
        x_{i}=2\beta_{i}-\sum_{k=1}^{i-1}\frac{H_{k,i}^{2}}{x_{k}}& i\ge 1.
  \end{cases}\]

  Furthermore, 
\[x_{i}=\frac{M(1,\ldots,i|1,\ldots,i)}{M(1,\ldots,i-1|1,\ldots,i-1)}\]
where \(M(I|J)\) is the minor of matrix \(M\) that corresponds to the rows with index in \(I\) and columns with index in \(J\). 
\end{lem}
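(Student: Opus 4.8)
The plan is to run Gaussian elimination on \(M\) from the top-left corner and identify the successive Schur complements explicitly. First observe that since \(M=2\beta-P\) is positive definite, every leading principal submatrix \(M_{[i]}:=(M_{k,l})_{1\le k,l\le i}\) is positive definite as well — it represents the restriction of the positive quadratic form \(f\mapsto\langle Mf,f\rangle\) to the coordinate subspace \(\{f:f_{i+1}=\cdots=f_n=0\}\) — hence \(\det M_{[i]}>0\) for every \(i\). In particular all leading principal minors are nonzero, which is the classical condition guaranteeing existence and uniqueness of the factorisation \(M=LU\) with \(L\) lower triangular of unit diagonal and \(U\) upper triangular; I would recall this briefly (induction on \(n\), peeling off the first row and column) rather than invoke it as a black box.

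Next I would set up the induction. Put \(S^{(1)}=M\) and, for \(1\le i\le n-1\), let \(S^{(i+1)}\) be the Schur complement obtained by eliminating the \(i\)-th pivot: the \((n-i)\times(n-i)\) matrix indexed by \(\{i+1,\dots,n\}\) with entries \(S^{(i+1)}_{k,m}=S^{(i)}_{k,m}-S^{(i)}_{k,i}(S^{(i)}_{i,i})^{-1}S^{(i)}_{i,m}\). Each \(S^{(i)}\) is symmetric, and row \(i\) of \(U\) is zeros in columns \(<i\) followed by row \(i\) of \(S^{(i)}\) in columns \(i,\dots,n\). So it is enough to prove, by induction on \(i\), that \(S^{(i)}\) has diagonal entries \(S^{(i)}_{k,k}=2\beta_k-\sum_{l=1}^{i-1}H_{l,k}^2/x_l\) for \(k\ge i\) and off-diagonal entries \(S^{(i)}_{k,m}=-\bigl(W_{k,m}+\sum_{l=1}^{i-1}H_{l,k}H_{l,m}/x_l\bigr)\) for \(i\le k\ne m\). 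The claimed shape of \(U\) follows immediately: the \((i,i)\) entry of \(S^{(i)}\) is exactly \(x_i\) and, for \(j>i\), the \((i,j)\) entry is exactly \(-H_{i,j}\), by the very definitions of \(x_i\) and \(H_{i,j}\).

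The induction is the heart of the argument but routine. Base case \(i=1\): \(S^{(1)}=M\) has diagonal \(2\beta_k\) (empty sum) and off-diagonal \(-W_{k,m}\), which matches, and in particular \(S^{(1)}_{1,1}=x_1\), \(S^{(1)}_{1,j}=-W_{1,j}=-H_{1,j}\). For the step, assuming the formula for \(S^{(i)}\) one reads off \(S^{(i)}_{i,i}=x_i\) and \(S^{(i)}_{i,k}=S^{(i)}_{k,i}=-H_{i,k}\) for \(k>i\) — this last being precisely the defining recursion for \(H_{i,k}\) — substitutes into \(S^{(i+1)}_{k,m}=S^{(i)}_{k,m}-S^{(i)}_{k,i}S^{(i)}_{i,m}/x_i=S^{(i)}_{k,m}-H_{i,k}H_{i,m}/x_i\), and the sums telescope from \(\sum_{l=1}^{i-1}\) to \(\sum_{l=1}^{i}\), both on and off the diagonal. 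One must check that \(x_i\ne0\) so the elimination is legitimate; this follows from positive-definiteness (and in fact the minor identity below gives \(x_i>0\)).

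Finally, for the determinant formula I would restrict \(M=LU\) to leading \(i\times i\) blocks: since \(L\) is lower triangular and \(U\) upper triangular, \(M_{[i]}=L_{[i]}U_{[i]}\), so \(\det M_{[i]}=\det L_{[i]}\cdot\det U_{[i]}=1\cdot(x_1\cdots x_i)\). Hence \(M(1,\dots,i|1,\dots,i)=x_1\cdots x_i\), and dividing by \(M(1,\dots,i-1|1,\dots,i-1)=x_1\cdots x_{i-1}\) — with the convention that the empty minor equals \(1\), which also covers \(i=1\) — gives \(x_i=M(1,\dots,i|1,\dots,i)/M(1,\dots,i-1|1,\dots,i-1)\); in particular each \(x_i\) is a ratio of positive leading principal minors, hence positive. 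The only mild obstacle is the index bookkeeping in the inductive step — aligning the range \(\sum_{l=1}^{i-1}\) in the formula for \(S^{(i)}\) with the definition of \(H_{i,\cdot}\) — so that the Schur-complement update reproduces exactly the stated recursions; there is no deeper difficulty.
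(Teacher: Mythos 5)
Your proof is correct and takes essentially the same route as the paper's: the paper performs Gaussian elimination by explicit unit lower-triangular elimination matrices \(T_k\) and tracks the evolving entries \(x_i^{(k)}, H_{i,j}^{(k)}\), while you phrase the same step-by-step computation in Schur-complement language and derive the determinant identity from \(M_{[i]}=L_{[i]}U_{[i]}\) (which the paper states without detail). The two arguments encode the same elimination.
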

  The result follows directly from (2.6) of~\cite{varga1981thelu}, but we prove it in Appendix~\ref{pfOfLemma1} for completeness' sake. 
\begin{clai}
\label{claim-laplace}
For any \(\theta_1>0\), \(\theta_2{ \ge}0\),
  \begin{align*}
  \int_0^{\infty}\exp(-\frac{\theta_1 x}{2}-\frac{\theta_{2}}{2x})\frac{1}{\sqrt{x}}dx=\exp(-\sqrt{\theta_1\theta_2})\sqrt{\frac{2\pi}{\theta_1}}.
  \end{align*}
\end{clai}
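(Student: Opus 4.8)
The plan is to evaluate $I(\theta_1,\theta_2)=\int_0^\infty \exp(-\tfrac{\theta_1 x}{2}-\tfrac{\theta_2}{2x})\tfrac{dx}{\sqrt x}$ by the classical substitution that symmetrizes the exponent. Writing $a=\theta_1/2$ and $b=\theta_2/2$, the idea is to complete the square inside the exponent: $ax+b/x=(\sqrt a\sqrt x-\sqrt b/\sqrt x)^2+2\sqrt{ab}$, so that $I=e^{-2\sqrt{ab}}\int_0^\infty e^{-(\sqrt a\sqrt x-\sqrt b/\sqrt x)^2}\,x^{-1/2}\,dx$, and $2\sqrt{ab}=\sqrt{\theta_1\theta_2}$ already produces the asserted prefactor. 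It then remains to show the residual integral equals $\sqrt{2\pi/\theta_1}$.

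For that residual integral I would first handle the case $\theta_2>0$ and substitute $u=\sqrt a\sqrt x-\sqrt b/\sqrt x$, which maps $(0,\infty)$ onto all of $\R$. A short computation gives $du=\tfrac12(\sqrt a\, x^{-1/2}+\sqrt b\, x^{-3/2})\,dx$; the trick is to pair this with the \emph{other} natural change of variable $v=-u$ coming from $x\mapsto b/(ax)$ (which also sweeps $(0,\infty)$ and leaves $x^{-1/2}\,dx$ invariant up to the obvious Jacobian), so that averaging the two representations of the integral replaces $x^{-1/2}\,dx$ by a clean multiple of $du$. Concretely, one obtains $\int_0^\infty e^{-(\sqrt a\sqrt x-\sqrt b/\sqrt x)^2}x^{-1/2}\,dx=\tfrac{1}{\sqrt a}\int_{-\infty}^\infty e^{-u^2}\,du=\sqrt{\pi/a}=\sqrt{2\pi/\theta_1}$, which closes the case $\theta_2>0$.

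Finally, the case $\theta_2=0$ is immediate: the integral is $\int_0^\infty e^{-\theta_1 x/2}x^{-1/2}\,dx=\Gamma(1/2)\,(2/\theta_1)^{1/2}=\sqrt{2\pi/\theta_1}$, matching the formula since $\sqrt{\theta_1\theta_2}=0$; alternatively one notes that both sides of the claimed identity are continuous in $\theta_2\ge 0$ (dominated convergence on the left), so the $\theta_2=0$ case follows from the $\theta_2>0$ case by taking a limit. The main (and only real) obstacle is carrying out the substitution $u=\sqrt a\sqrt x-\sqrt b/\sqrt x$ carefully — in particular justifying that pairing it with $x\mapsto b/(ax)$ legitimately converts the measure $x^{-1/2}\,dx$ into $a^{-1/2}\,du$ — but this is the standard Glasser-type manipulation and presents no genuine difficulty.
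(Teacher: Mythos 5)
Your computation is correct: completing the square as $ax+b/x=(\sqrt{ax}-\sqrt{b/x})^2+2\sqrt{ab}$ with $a=\theta_1/2$, $b=\theta_2/2$ gives $2\sqrt{ab}=\sqrt{\theta_1\theta_2}$, and pairing the substitution $u=\sqrt{ax}-\sqrt{b/x}$ with the involution $x\mapsto b/(ax)$ (which leaves the exponent invariant and swaps $x^{-1/2}\,dx$ with $\sqrt{b/a}\,x^{-3/2}\,dx$) lets you average to produce exactly $\tfrac{1}{\sqrt a}\int_{-\infty}^\infty e^{-u^2}du=\sqrt{\pi/a}=\sqrt{2\pi/\theta_1}$. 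The $\theta_2=0$ case via $\Gamma(1/2)$ is also right.

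The difference from the paper is one of level of detail rather than substance. The paper disposes of the claim in one sentence by recognizing the integrand as the (unnormalized) density of a $\Gamma(1/2)$ variable when $\theta_2=0$ and of an Inverse Gaussian when $\theta_2>0$, and then invoking the known normalizing constants of those distributions. You instead rederive that normalizing constant from scratch via the classical Glasser-type reflection argument. Your version is self-contained and makes the source of the $e^{-\sqrt{\theta_1\theta_2}}$ factor transparent; the paper's version is shorter and leans on a standard fact that the authors (and their audience) take as known. Both are valid, and in fact your argument is precisely the standard proof of the fact the paper is citing.
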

\begin{proof}
 The case \(\theta_2=0\) corresponds to the normalisation of the \(\Gamma(\demi)\) variable. The case \(\theta_2>0\) corresponds to the normalization of the Inverse Gaussian law \(\hbox{IG}({\theta_1\over \theta_2}, {1\over \theta_2})\).
\end{proof}
Let us now prove  Theorem~\ref{thm_main}.
In the sequel we take the convention, given any real sequence \((a_k)_{k\in\NN}\), that \(\sum_{k=n}^m a_k=0\) if \(n>m\).

By Lemma~\ref{lem1},
\[\sum_{k=1}^n \theta_l\beta_l=\sum_{k=1}^n\theta_k(\frac{x_k}{2}+\sum_{l=1}^{k-1}\frac{H_{l,k}^2}{2x_l})
  =\sum_{l=1}^n\left[\frac{\theta_l x_l}{2}+\frac{1}{2x_l}(\sum_{k=l+1}^n \theta_k H_{l,k}^2)\right].\]
  
Define
\begin{align*}
\Psi: (\R_+\setminus\{0\})^n &\longrightarrow \ddd\\
(x_{i})_{1\leq i\leq n}&\longmapsto (\beta_{i})_{1\leq i\leq n}=\left(\frac{x_{i}}{2}+\sum_{k=1}^{i-1}\frac{H_{k,i}^{2}}{x_{k}}\right)_{1\leq i\leq n}.
\end{align*}

Then \(\Psi\) is a bijection, since a symmetric matrix is positive definite if and only if all of its diagonal minors are positive. Its Jacobian is \(2^{-n}\), hence it is a diffeomorphism.

Therefore
\[I:= \int\mathds{1}_{\{2\beta -P>0\}}\frac{\exp(-\theta\beta)}{\sqrt{|2\beta -P|}}d\beta=\int_{\mathbb{R}_+^n}\exp\left(-\sum_{l=1}^n\left[\frac{\theta_l x_l}{2}+\frac{1}{2x_l}(\sum_{k=l+1}^n \theta_k H_{l,k}^2)\right]\right)\frac{1}{\sqrt{x_1\cdots x_n}}\frac{1}{2^n}dx.\]

Let, for all \(1\le l\le m\le n\),
\begin{align*}
R_{l,m}&=\left(\sum_{j=m+1}^{n}H_{l,j}\sqrt{\theta_{j}}\right)^{2}+\sum_{k=l+1}^{m}\theta_{k}H_{l,k}^{2}\\
S_{l,m}&=\frac{\theta_lx_l}{2}+\frac{R_{l,m}}{2x_l}.
\end{align*}
Note that \(R_{l,m}\) (resp. \(S_{l,m}\)) only depends on \(x_1\), \(\ldots\) \(x_{l-1}\) (resp. \(x_1\), \(\ldots\) \(x_{l}\)). 

Let, for all \(1\le m\le n\), 
\[I_m:=\int_{\mathbb{R}_+^m}\exp\left(-\sum_{l=1}^mS_{l,m}\right)\frac{dx_{1}\cdots dx_{m}}{\sqrt{x_1\cdots x_m}}.\]
We will take the convention that, if \(m=0\), the integral of \(dx_{1}\cdots dx_{m}\) is \(1\), so that \(I_0=1\).

Note that \(I=I_n/2^n\). We also have the following lemma. 
\begin{lem}
\label{lem:in}
For all \(1\le m\le n\), we have
\[I_m=\sqrt{\frac{2\pi}{\theta_m}}\exp\left(-\sum_{j=m+1}^nW_{m,j}\sqrt{\theta_m\theta_j}\right)I_{m-1}.\]
\end{lem}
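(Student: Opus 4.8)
The plan is to prove Lemma~\ref{lem:in} by induction on $m$, peeling off the integration variables one at a time starting from $x_m$, $x_{m-1}$, and so on down to $x_1$; more precisely I would prove the stronger statement that for every $0\le m\le n$,
\[
I_m=\left(\prod_{l=1}^m\sqrt{\frac{2\pi}{\theta_l}}\right)\exp\left(-\sum_{l=1}^m\sum_{j=l+1}^nW_{l,j}\sqrt{\theta_l\theta_j}\right),
\]
and then the displayed recursion is just the ratio $I_m/I_{m-1}$. Actually the cleanest route is to establish the recursion directly: I would integrate $I_m$ over $x_m$ first, using Claim~\ref{claim-laplace}, and check that what remains is exactly $\sqrt{2\pi/\theta_m}\,e^{-\sum_{j>m}W_{m,j}\sqrt{\theta_m\theta_j}}$ times the integral $I_{m-1}$.

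The key computation is the $x_m$-integration. In $I_m$ the only term in $\sum_{l=1}^m S_{l,m}$ that depends on $x_m$ is $S_{m,m}=\frac{\theta_m x_m}{2}+\frac{R_{m,m}}{2x_m}$, where by definition $R_{m,m}=\left(\sum_{j=m+1}^n H_{m,j}\sqrt{\theta_j}\right)^2$ (the sum $\sum_{k=m+1}^m$ being empty). So Claim~\ref{claim-laplace} with $\theta_1=\theta_m$ and $\theta_2=R_{m,m}$ gives
\[
\int_0^\infty e^{-S_{m,m}}\frac{dx_m}{\sqrt{x_m}}=\sqrt{\frac{2\pi}{\theta_m}}\,\exp\left(-\sqrt{\theta_m R_{m,m}}\right)=\sqrt{\frac{2\pi}{\theta_m}}\,\exp\left(-\sqrt{\theta_m}\sum_{j=m+1}^nH_{m,j}\sqrt{\theta_j}\right),
\]
using $R_{m,m}\ge 0$ and $\sqrt{R_{m,m}}=\sum_{j>m}H_{m,j}\sqrt{\theta_j}$ (which is nonnegative because all $W_{i,j}\ge 0$ and hence, by the recursion in Lemma~\ref{lem1}, all $H_{i,j}\ge 0$ and all $x_i>0$). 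After this integration the remaining integrand over $x_1,\dots,x_{m-1}$ is $\exp\left(-\sum_{l=1}^{m-1}S_{l,m}\right)$ times the extra factor $\exp\left(-\sqrt{\theta_m}\sum_{j>m}H_{m,j}\sqrt{\theta_j}\right)$, and the latter does not depend on $x_1,\dots,x_{m-1}$ only through $H_{m,j}$ — but $H_{m,j}$ does depend on $x_1,\dots,x_{m-1}$, so this factor cannot simply be pulled out.

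This is where the main obstacle lies: the surviving exponential $\exp\left(-\sqrt{\theta_m}\sum_{j>m}H_{m,j}\sqrt{\theta_j}\right)$ must be reabsorbed into the $S_{l,m-1}$ to reconstruct $I_{m-1}$. The mechanism is the recursion $H_{m,j}=W_{m,j}+\sum_{k=1}^{m-1}\frac{H_{k,m}H_{k,j}}{x_k}$ from Lemma~\ref{lem1}. Substituting this and expanding, $\sqrt{\theta_m}\sum_{j>m}H_{m,j}\sqrt{\theta_j}=\sum_{j>m}W_{m,j}\sqrt{\theta_m\theta_j}+\sum_{k=1}^{m-1}\frac{1}{x_k}H_{k,m}\sqrt{\theta_m}\sum_{j>m}H_{k,j}\sqrt{\theta_j}$. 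The first piece is precisely the constant $\sum_{j>m}W_{m,j}\sqrt{\theta_m\theta_j}$ claimed in the lemma, so it comes out of the integral. The second piece, for each $k<m$, contributes a term $\frac{1}{2x_k}\cdot 2H_{k,m}\sqrt{\theta_m}\sum_{j>m}H_{k,j}\sqrt{\theta_j}$ to the exponent; I would verify that adding this to $\sum_{l=1}^{m-1}S_{l,m}$ yields exactly $\sum_{l=1}^{m-1}S_{l,m-1}$. Concretely this reduces to the algebraic identity, for each fixed $l\le m-1$,
\[
R_{l,m}+2H_{l,m}\sqrt{\theta_m}\sum_{j=m+1}^nH_{l,j}\sqrt{\theta_j}+\theta_m H_{l,m}^2=R_{l,m-1},
\]
which is immediate from the definition $R_{l,m-1}=\left(\sum_{j=m}^n H_{l,j}\sqrt{\theta_j}\right)^2+\sum_{k=l+1}^{m-1}\theta_k H_{l,k}^2$ by splitting off the $j=m$ summand from the square: $\left(\sum_{j=m}^n H_{l,j}\sqrt{\theta_j}\right)^2=\left(\sum_{j=m+1}^n H_{l,j}\sqrt{\theta_j}\right)^2+2H_{l,m}\sqrt{\theta_m}\sum_{j>m}H_{l,j}\sqrt{\theta_j}+\theta_m H_{l,m}^2$, and noting $\sum_{k=l+1}^{m-1}\theta_k H_{l,k}^2=\theta_m H_{l,m}^2$ is absorbed by shifting the upper limit of the $R_{l,m}$ sum. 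Once this bookkeeping identity is checked, the integrand collapses to $e^{-\sum_{l=1}^{m-1}S_{l,m-1}}$ times constants, the remaining integral is $I_{m-1}$ by definition, and the lemma follows. The only genuine subtlety to be careful about is the nonnegativity of the $H_{i,j}$ and positivity of the $x_i$ needed to apply Claim~\ref{claim-laplace} with $\theta_2=R_{m,m}\ge0$; both follow by an easy simultaneous induction from the recursion in Lemma~\ref{lem1} together with $x_i>0$ (the latter from positive definiteness of $M$ and the minor formula).
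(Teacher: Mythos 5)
Your approach is exactly the paper's: integrate over $x_m$ using Claim~\ref{claim-laplace}, apply the recursion for $H_{m,j}$ from Lemma~\ref{lem1} to split the surviving exponent into a constant piece $\sum_{j>m}W_{m,j}\sqrt{\theta_m\theta_j}$ plus a piece that gets reabsorbed into $\sum_{l=1}^{m-1}S_{l,m}$ to produce $\sum_{l=1}^{m-1}S_{l,m-1}$. The structure is right and the reduction to a purely algebraic identity involving $R_{l,m}$ and $R_{l,m-1}$ is the correct way to organize it.

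However, the displayed algebraic identity you reduce to is wrong: you wrote
\[
R_{l,m}+2H_{l,m}\sqrt{\theta_m}\sum_{j=m+1}^nH_{l,j}\sqrt{\theta_j}+\theta_m H_{l,m}^2=R_{l,m-1},
\]
but the extra $\theta_m H_{l,m}^2$ should not be there. From the definitions,
\[
R_{l,m-1}-R_{l,m}=\Bigl(\sum_{j=m}^n H_{l,j}\sqrt{\theta_j}\Bigr)^2-\Bigl(\sum_{j=m+1}^n H_{l,j}\sqrt{\theta_j}\Bigr)^2-\theta_m H_{l,m}^2
=2H_{l,m}\sqrt{\theta_m}\sum_{j=m+1}^nH_{l,j}\sqrt{\theta_j},
\]
because the $\theta_m H_{l,m}^2$ produced by expanding the square exactly cancels against the $k=m$ summand of $\sum_{k=l+1}^m\theta_k H_{l,k}^2$ inside $R_{l,m}$ (equivalently, it combines with $\sum_{k=l+1}^{m-1}\theta_k H_{l,k}^2$ to rebuild the $\sum_{k=l+1}^m$ sum). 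Your prose sentence about the term ``being absorbed by shifting the upper limit of the $R_{l,m}$ sum'' gestures at exactly this cancellation, but it contradicts the display; as written, the clause ``$\sum_{k=l+1}^{m-1}\theta_k H_{l,k}^2=\theta_m H_{l,m}^2$'' is simply false. If one took the displayed identity at face value one would get $\sum_l S_{l,m}$ plus the reabsorbed terms equal to $\sum_l S_{l,m-1}-\sum_l\theta_m H_{l,m}^2/(2x_l)$, not $\sum_l S_{l,m-1}$, and the induction would not close. Also double-check the reduction step itself: the reabsorbed contribution to the exponent from $x_l$ is $\tfrac{1}{2x_l}\cdot 2H_{l,m}\sqrt{\theta_m}\sum_{j>m}H_{l,j}\sqrt{\theta_j}$, so requiring $S_{l,m}$ plus this to equal $S_{l,m-1}$ reduces precisely to $R_{l,m}+2H_{l,m}\sqrt{\theta_m}\sum_{j>m}H_{l,j}\sqrt{\theta_j}=R_{l,m-1}$, with no $\theta_m H_{l,m}^2$. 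Dropping that spurious term fixes the proof.
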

\begin{proof}
Using {Claim} \ref{claim-laplace}, we deduce
\begin{align}
\nonumber
I_m&=\int_{\mathbb{R}_+^m}\exp\left(-\left[\frac{\theta_mx_m}{2}+\frac{R_{m,m}}{2x_m}+\sum_{l=1}^{m-1}S_{l,m}\right]\right)\frac{dx_{1}\cdots dx_{m}}{\sqrt{x_1\cdots x_m}}\\
\label{in}
&=\int_{\mathbb{R}_+^{m-1}}\exp\left(-\sqrt{R_{m,m}\theta_m}-\sum_{l=1}^{m-1}S_{l,m}
\right)\frac{dx_{1}\cdots dx_{m-1}}{\sqrt{x_1\cdots x_{m-1}}}.
\end{align}
Now 
\(R_{m,m}=\left(\sum_{j=m+1}^{n}H_{l,j}\sqrt{\theta_{j}}\right)^{2}\) and 
\[H_{m,j}=W_{m,j}+\sum_{l=1}^{m-1}\frac{H_{l,m}H_{l,j}}{x_l},\]
so that
\[\sqrt{R_{m,m}\theta_m}=\sum_{j=m+1}^nW_{m,j}\sqrt{\theta_m\theta_j}
+\sum_{l=1}^{m-1}\frac{H_{l,m}\sqrt{\theta_m}}{x_l}\sum_{j=m+1}^nH_{l,j}\sqrt{\theta_j}.\]
On the other hand, for all \(1\le l\le m-1\),
\[S_{l,m}-S_{l,m-1}=-\frac{H_{l,m}\sqrt{\theta_m}}{x_l}\sum_{j=m+1}^nH_{l,j}\sqrt{\theta_j}.\]
Therefore
\[\sqrt{R_{m,m}\theta_m}+\sum_{l=1}^{m-1}S_{l,m}
=\sum_{j=m+1}^nW_{m,j}\sqrt{\theta_m\theta_j}+\sum_{l=1}^{m-1}S_{l,m-1},\]
which enables to conclude by \eqref{in}.
\end{proof}
We deduce from Lemma \ref{lem:in}, by induction, that 
\[I=\frac{I_n}{2^n}
=\frac{1}{2^{n}}\sqrt{\frac{(2\pi)^{n}}{\theta_{n}\cdots \theta_{1}}}\exp\left(-\sum_{\{i,j\}\in E}W_{i,j}\sqrt{\theta_{i}\theta_{j}}\right),\]
which enables us to conclude.
\section{Proof of Proposition \ref{p:green} and Theorem \ref{thm3}}
\label{s_proof_thm3}
\subsection{Proof of Proposition \ref{p:green}}
Fix \(i_0\in V\), and let \(\be\in\ddd\).  Let us first justify the existence and uniqueness of \(u(i_{0},i)\) defined by the linear
  system~\eqref{system}. 
  As \((2\beta-P)\) is an M-matrix, its inverse \(G\) satisfies \(G(i,j)>0\) for any \(i,j\).
A solution \((u_j)\) of equation~\eqref{system} is necessarily of the form \(e^{u_j}=2 \gamma G(i_0,j)\) for some
constant \(\gamma \in \R\). The normalization \(u_{i_0}=0\) implies \(\gamma= {1\over 2G(i_0,i_0)}\).
Hence the unique solution of the system~\eqref{system} is given by \(u_j=u(i_0,j)\) defined in Theorem~\ref{thm3}.

Consider the following map:
\begin{eqnarray}
\nonumber
\Phi\;:\;& \ddd&\rightarrow \{(u_j)_{j\in V}\in \R^V, \;\; u_{i_0}=0\}\times (\R_+\setminus\{0\})
\\
\label{def:phi}
&(\beta)&\mapsto ((u_j), \gamma),
\end{eqnarray}
where \((u_j)\) is the unique solution of the system~\eqref{system} and \(\gamma={1\over 2 G(i_0,i_0)}\).

We first prove that \(\Phi\) is a diffeomorphism. By the previous argument it is well-defined and injective. Reciprocally,
starting from \(((u_j),\gamma)\) on the right hand side, we define \((\beta_i)\) by
\begin{equation}
\label{beu}
\beta_i= \sum_{j\sim i}\frac{1}{2} W_{i,j} e^{u_j-u_i} +\indic_{i=i_0} \gamma.
\end{equation}
It is clear that with this definition, \((u_j)\) is the solution of~\eqref{system} with \((\beta_j)\). It remains to prove that
\(2\beta -P>0\): it is a consequence Theorem~(2.3)- (J30) of~\cite{berman1994nonnegative}:
\begin{prop}
\label{mmat}
  Let \(A\in Z_{n}=\{M\in M_{n}(\mathbb{R}),\ m_{i,j}\leq 0,\text{ if } i\neq j\}\). Then \(A\) is positive stable\footnote{All of its eigenvalues have positive real part.} if and only if there exists \(\xi\gg 0\)\footnote{\(\xi\gg \eta\) means for any coordinate \(i\), \(\xi_{i}>\eta_{i}\)} with \(A\xi>0\)\footnote{\(\xi>0\) means \(\xi_{i}\geq 0\) and \(\xi\neq 0\)} and
\begin{equation}
\label{eq-to-justify}
\sum_{j=1}^{k}a_{k,j}\xi_{j}>0,\ k=1,\ldots,n.
\end{equation}
\end{prop}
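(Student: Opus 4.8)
The plan is to first normalise $\xi$ to the all-ones vector $\mathbf 1=(1,\dots,1)$ and then treat the two implications separately: ``$A$ positive stable implies the existence of such a $\xi$'' from the standard theory of M-matrices, and the converse by an induction on $n$ based on the Schur complement of the first diagonal entry, which dovetails with the $LU$-decomposition of Lemma~\ref{lem1}. For the normalisation, given $\xi\gg0$ put $D=\mathrm{diag}(\xi_1,\dots,\xi_n)$ and $A'=D^{-1}AD$: then $A'$ has the same spectrum as $A$, still lies in $Z_n$ (the off-diagonal entries get multiplied by the positive numbers $\xi_j/\xi_i$), $A'\mathbf 1=D^{-1}(A\xi)$, and $\sum_{j=1}^k a'_{k,j}=\xi_k^{-1}\sum_{j=1}^k a_{k,j}\xi_j$; hence replacing $(A,\xi)$ by $(A',\mathbf 1)$ changes neither the conclusion nor the hypotheses. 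So from now on we assume $\xi=\mathbf 1$, i.e.\ $A\in Z_n$, $A\mathbf 1\ge 0$, and $\sum_{j=1}^k a_{k,j}>0$ for every $k$ (the instance $k=n$ of \eqref{eq-to-justify} already forces $A\mathbf 1\neq 0$).

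For the direction ($\Rightarrow$): if $A$ is positive stable then, being a $Z$-matrix, it is a nonsingular M-matrix, so $A^{-1}$ exists and is entrywise nonnegative; as an invertible matrix it has no zero row, whence $\xi:=A^{-1}\mathbf 1\gg0$, while $A\xi=\mathbf 1\gg0$. For $k<n$, $\sum_{j=1}^k a_{k,j}\xi_j=(A\xi)_k-\sum_{j>k}a_{k,j}\xi_j=1-\sum_{j>k}a_{k,j}\xi_j\ge1$, since $a_{k,j}\le 0$ for $j>k$ and $\xi_j>0$; this is \eqref{eq-to-justify}.

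For the direction ($\Leftarrow$): I would prove by induction on $n$ that, under the normalised hypotheses, all leading principal minors of $A$ are positive; a $Z$-matrix with this property is a nonsingular M-matrix, hence positive stable (see \cite{berman1994nonnegative}). The case $n=1$ is just $a_{1,1}>0$. For the inductive step, write $A=\bigl(\begin{smallmatrix}a_{1,1}&v^{T}\\ w&A_1\end{smallmatrix}\bigr)$ with $v,w\le0$ and $A_1\in Z_{n-1}$; since $a_{1,1}>0$ the Schur complement $\widetilde A=A_1-a_{1,1}^{-1}wv^{T}$ is well defined, lies in $Z_{n-1}$ (off the diagonal the correction $-a_{1,1}^{-1}w_iv_l$ is $\le0$), and --- this is the crux --- again satisfies $\widetilde A\mathbf 1\ge0$ and $\sum_{l=1}^{i}\widetilde a_{i,l}>0$ for every $i$. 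To check the latter one expands $\sum_{l=1}^{i}\widetilde a_{i,l}$ and combines three facts: $(A\mathbf 1)_{i+1}=w_i+(A_1\mathbf 1)_i\ge 0$; the instance $k=i+1$ of \eqref{eq-to-justify} written as $\sum_{m=2}^{i+1}a_{i+1,m}>-w_i$; and the elementary bound $\sum_{m=1}^{i+1}a_{1,m}\ge(A\mathbf 1)_1\ge0$, valid because $a_{1,m}\le0$ for $m\ge2$ so deleting the tail of the row only raises the sum. These combine (using $w_i\le0<a_{1,1}$) to give $(\widetilde A\mathbf 1)_i\ge0$ and $\sum_{l=1}^{i}\widetilde a_{i,l}>-w_i\,(A\mathbf 1)_1/a_{1,1}\ge0$. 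By induction all leading principal minors of $\widetilde A$ are positive, and since the $k$-th leading minor of $A$ equals $a_{1,1}$ times the $(k-1)$-th leading minor of $\widetilde A$ (Schur's determinant identity), with $a_{1,1}>0$, the same holds for $A$.

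The step I expect to be hardest is showing that condition \eqref{eq-to-justify} is inherited by the Schur complement, since that is exactly where the two hypotheses on $\xi$ have to cooperate: bounding $\sum_{l\le i}\widetilde a_{i,l}$ needs the leading partial sums of the \emph{first} row of $A$ to be nonnegative, whereas \eqref{eq-to-justify} only controls row $k$ at level $k$, so this information must instead be extracted from $A\mathbf 1\ge0$ together with the sign pattern of a $Z$-matrix. (A smaller point to be careful about in the $(\Rightarrow)$ direction is that $\xi=A^{-1}\mathbf 1$ is genuinely $\gg0$ rather than merely nonnegative, which uses that $A^{-1}$, being invertible, has no zero row.)
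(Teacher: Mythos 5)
The paper does not actually prove Proposition~\ref{mmat}: it is quoted verbatim as Theorem~(2.3), characterization (J30), of \cite{berman1994nonnegative}, and no in-paper argument exists to compare against. Your blind proof therefore supplies an argument where the paper supplies none, and as far as I can see it is correct. The conjugation by $D=\mathrm{diag}(\xi_1,\dots,\xi_n)$ legitimately normalises to $\xi=\mathbf{1}$; the $(\Rightarrow)$ direction is the standard fact that a positive stable $Z$-matrix is a nonsingular M-matrix with $A^{-1}\ge0$ entrywise, plus the observation that an invertible nonnegative matrix has no zero row, so $\xi=A^{-1}\mathbf{1}\gg0$. The Schur-complement induction for $(\Leftarrow)$ also checks out: writing $\alpha_i=\sum_{m\le i+1}a_{i+1,m}>0$ (instance $k=i+1$ of \eqref{eq-to-justify}) and $\beta_i=\sum_{m\le i+1}a_{1,m}\ge(A\mathbf{1})_1\ge0$, one computes $\sum_{l\le i}\widetilde a_{i,l}=\alpha_i-a_{1,1}^{-1}w_i\beta_i>0$ and $(\widetilde A\mathbf{1})_i\ge -w_i\,a_{1,1}^{-1}(A\mathbf{1})_1\ge0$, so the hypotheses pass to $\widetilde A$; Schur's determinant identity then transports the positivity of leading principal minors up the induction. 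Two remarks. First, both directions still invoke \emph{other} equivalent characterizations of nonsingular M-matrices from \cite{berman1994nonnegative} (positive stable $\Leftrightarrow$ nonnegative inverse, and positive stable $\Leftrightarrow$ all leading principal minors positive, for $Z$-matrices); these are logically independent of (J30), so there is no circularity, but the proof is not self-contained from scratch. Second, the Schur pivoting you perform is exactly the recursion generating the pivots $x_i$ of the $LU$-decomposition in Lemma~\ref{lem1}, so your argument ties Proposition~\ref{mmat} directly to machinery the paper has already set up --- a connection the paper leaves implicit by simply citing the textbook.
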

\noindent We will choose a bijection \(\sigma\) between \(V\) and \(\{1,\ldots,|V|\}\), and apply Proposition \ref{mmat} with 
\[A=((2\beta-P)_{\sigma^{-1}(i),\sigma^{-1}(j)})_{1\le i,j \le |V|},\,\,\,\xi=(e^{u_{\sigma^{-1}(i)}})_{1\le i\le |V|}.\]

 Obviously, \(\xi\gg 0\), and \(A\xi >0\) follows from \((2\be-P)e^{u.}=\delta_{i_0}/(2G(i_0,i_0))\). Now fix any spanning tree \(\Tc\) of the graph and its corresponding distance \(d\) on \(V\) throughout  the tree. Choose \(\s\) so that \(\s(i_0)=|V|\), and \(\s(i)<\s(j)\) if \(d(i_0,i)>d(i_0,j)\): this implies that, for all \(k<|V|\), there exists \(l>k\) such that \(W_{\s^{-1}(k),\s^{-1}(l)}>0\) and therefore that \eqref{eq-to-justify} holds. We conclude that \(2\beta-P>0\).
\subsection{Proof of Theorem \ref{thm3}}
We give two proofs.

{\it First proof: }We make the change of variable given by \(\Phi^{-1}\), in \eqref{def:phi} and we now prove that if \(\beta\) has distribution
\(\nu^{W,\phi^2}\), then \((u,\gamma)=\Phi^{-1}(\beta)\) has distribution \(\qqq^{W,\phi}_{i_0}\otimes \Gamma(\demi, \frac{1}{\phi_{i_0}^2})\).

Let \(J\) be the Jacobian matrix of
  \(\Phi^{-1}\) (i.e. \(J_{i,j}=\frac{\partial \beta_i}{\partial u_j},
  j\neq i_0\, J_{i,i_0}=\frac{\partial \beta_i}{\partial \gamma}\)), then \[J_{i,j}=
  \begin{cases}
    \delta_{i,i_0} & \text{ if } j=i_0, \\
    \frac{1}{2}W_{i,j}e^{u_j-u_i} & \text{ if } i\neq j, \ j\neq i_0, \\
    -\beta_i & \text{ if }i=j\neq i_0.
  \end{cases}\]
We can factorize the \(i\)th row of \(J\) by \(e^{-2u_i}\) for
  each \(i\), then expand the resulting matrix according to the
  \(i_0\)th column, and we find that
  \[|J|=\frac{1}{2^{|V|-1}}e^{-2\sum_i u_i}D(W,u)\] 
  On the other
  hand, by \eqref{beu} we deduce 
  \[|2\beta-P|=2\gamma e^{-2\sum_i u_i}D(W,u).\]
Let \(\psi\) be a positive test function. We have 
  \begin{align*}
 &  \int \psi(u,\gamma) \nu^{W,\phi^2}(d\beta)
\\
 =&
   \int \psi(u,\gamma)
   2^{|V|/2}\frac{\prod_{i}\phi_i}{\pi^{|V|/2}}\frac{\exp(-\sum_i\beta_i\phi_i^2+\sum_{\{i,j\}\in E}W_{i,j}\phi_i\phi_j)}{\sqrt{2\gamma e^{-2\sum_i u_i}D(W,u)}} \frac{1}{2^{|V|-1}}e^{-2\sum_i u_i}D(W,u)dud\gamma\\
    =&
   \int \psi(u,\gamma)\frac{\prod_{i}\phi_i}{(2\pi)^{(|V|-1)/2}}e^{-\sum_i
      u(i_0,i)}e^{-\frac{1}{2}\sum_{i\sim
        j}W_{i,j}(e^{u_i-u_j}\phi_j^2+e^{u_j-u_i}\phi_i^2-2\phi_i\phi_j)}\sqrt{D(W,u)}\cdot
    \frac{e^{-\phi_{i_0}^2\gamma}}{\sqrt{\pi\gamma}}dud\gamma
    \\
=&
   \int \psi(u,\gamma) \qqq^{W,\phi}_{i_0}(du)      \frac{\phi_{i_0}e^{-\phi_{i_0}^2\gamma}}{\sqrt{\pi\gamma}}d\gamma.
  \end{align*}
This concludes the proof of Theorem~\ref{thm3} and of Corollary~\ref{coro-thm3}.

{\it Second proof: }This proof does not make use of the explicit expression of law \(\mathcal{Q}_{i_0}^{W,\phi}\) of \(U\) in \eqref{density_u}, but rather deduces its Laplace transfom  from direct computation of the probability of a path. {Note that compared to the first proof, this one uses the representation
of the VRJP as a mixture of Markov Jump Processes, cf Theorem 2 of \cite{sabot2011edge} or Theorem \ref{thm-ST} in section \ref{sec:vrjp}, and hence it uses implicitly that
the measure \(\mathcal{Q}_{i_0}^{W,\phi}\) is a probability measure.}

We will show that, if \((u,\gamma)\) has distribution \(\qqq^{W,\phi}_{i_0}\otimes \Gamma(\demi, \frac{1}{\phi_{i_0}^2})\), then 
\(\beta=\Phi(u,\gamma)\) has distribution \(\nu^{W,\phi^2}\), which clearly implies the result.

It follows by direct computation (see \cite{sabot2013ray}, proof of Theorem 3) that the probability that, at time \(t\), the VRJP \(Z\) has followed a path \(Z_0=x_0\), \(x_1\), \(\ldots\), \(Z_t=x_n\) with jump times
respectively in \([t_i,t_i+dt_i]\), \(i=1\ldots n\),  where \(t_0=0<t_1<\ldots<t_n<t=t_{n+1}\), is \(p_tdt\), where
\begin{align*}
p_t&=\exp\left(-\sum_{\{i,j\}\in E}W_{i,j}\left(\sqrt{\phi_i^2+\ell_i}\sqrt{\phi_j^2+\ell_j}-\phi_i\phi_j\right)\right)
    \prod_{i\ne i_0}\frac{\phi_{i}}{\sqrt{\phi_i^2+\ell_i}}\\
dt&=\prod_{i=1}^{n}\frac{1}{2}W_{x_{i-1}x_i}\,dt_i,
\end{align*}
with  \((\ell_i)_{i\in V}=(\ell_i(t))_{i\in V}\)  local time at time \(t\).

On the other hand, using that, conditionally on \(U=(U_i)_{i\in V}\) in Theorem \ref{thm-ST}, \(Z\) is a Markov jump process with jump rate \(W_{ij}e^{U_j-U_i}/2\) from \(i\) to \(j\), this probability of a path is also \(q_tdt\), where
\[q_t= \int e^{-\sum_{i\in V} \tbe_i\ell_i}\mathcal{Q}_{i_0}^{W,\phi}(du)\]
and \(\tbe\) is defined in \eqref{def:tbe}.

Let \(\Gamma=\Gamma(\demi, \frac{1}{\phi_{i_0}^2})\). By identification of \(p_t\) and \(q_t\) we deduce that 
\begin{align*}
&\int e^{-\sum_{i\in V} \be_i\ell_i}\mathcal{Q}_{i_0}^{W,\phi}(du)\Gamma(d\g)
=\int e^{-\sum_{i\in V} \tbe_i\ell_i}\mathcal{Q}_{i_0}^{W,\phi}(du)\int e^{-\ell_{i_0}\g}\Gamma(d\g)\\
&=\exp\left(-\sum_{\{i,j\}\in E}W_{i,j}\left(\sqrt{\phi_i^2+\ell_i}\sqrt{\phi_j^2+\ell_j}-\phi_i\phi_j\right)\right)
   \left( \prod_{i\ne i_0}\frac{\phi_{i}}{\sqrt{\phi_i^2+\ell_i}}\right)\frac{1}{\sqrt{1+\ell_{i_0}/\phi_{i_0}^2}},
\end{align*}
which shows that the distribution \(\qqq^{W,\phi}_{i_0}\otimes \Gamma(\demi, \frac{1}{\phi_{i_0}^2})\) has the same Laplace transform as \(\nu^{W,\phi^2}\) in Proposition \ref{prop-main}.

\appendix
\section{Proof of Lemma~\ref{lem1}}
\label{pfOfLemma1}
\begin{proof}
We perform successive Gauss elimination on \(M\) to make it upper triangular. Denote by \(l_{1},\ldots,l_{n}\) the \(n\) rows of any \(n\times n\) matrix. Firstly, let 
\[M^{(1)}=M=
\begin{pmatrix}
  x^{(1)}_{1}& -H^{(1)}_{1,2} & \cdots & -H^{(1)}_{1,n} \\
  -H^{(1)}_{1,2} & x^{(1)}_{2} & \cdots & -H^{(1)}_{2,n}\\
   \cdots & \cdots & \cdots & \cdots \\
   -H^{(1)}_{1,n} & -H^{(1)}_{n,2} & \cdots & x^{(1)}_{n}
\end{pmatrix}
\]
where we set, for any \(1\le i,j\le n\), \(x^{(1)}_{i}=2\beta_{i}\) and  \(H_{i,j}^{(1)}=W_{i,j}\).

We define a sequence of matrices \(M^{(k)}\) recursively, such that 
\[M^{(k)}=
\begin{pmatrix}
  x_{1}^{(1)}& -H_{1,2}^{(1)}&\cdots &\cdots&\cdots&\cdots&\cdots&-H_{1,n}^{(1)}\\
  0&x_2^{(2)} & -H^{(2)}_{2,3} &&&&& -H^{(2)}_{2,n}\\
  \vdots &0&\ddots&\ddots&&&&\vdots\\
 \vdots &&\ddots&x_{k-1}^{(k-1)}&-H_{k-1,k}^{(k-1)}&\cdots&\cdots&-H_{k-1,n}^{(k-1)}\\
  \vdots&&&0&x_{k}^{(k)}&-H_{k,k+1}^{(k)}&\cdots &-H_{k,n}^{(k)}\\
  \vdots&&&\vdots&-H_{k,k+1}^{(k)}&\ddots&\ddots&\vdots\\
 \vdots&&&\vdots&\vdots&&\ddots&-H_{n-1,n}^{(k)}\\
  0&0&\cdots&0&-H_{k,n}^{(k)}&\cdots&-H_{n-1,n}^{(k)}&-x_{n}^{(k)}
\end{pmatrix},
\]
 by the following rule: \(M^{(k+1)}\) is constructed from \(M^{(k)}\)  by addition of columns 
\(l_{k+1}\leftarrow l_{k+1}+\frac{H^{(k)}_{k,k+1}}{x^{(k)}_{k}}l_{k},\ldots, l_{n}\leftarrow l_{n}+\frac{H^{(k)}_{k,n}}{x^{(k)}_{k}}l_{k}\) in \(M^{(k)}\). In other words,
\[
T_{k}M^{(k)}=M^{(k+1)}, \;\; \hbox{ where }\;\; 
[T_{k}]_{i,j}=
\begin{cases}
  1 & i=j\\
  \frac{H^{(k)}_{k,i}}{x^{(k)}_{k}} & i>j=k\\
  0 & \text{otherwise}
\end{cases}
\]

It is easy to check that \((x_i^{(k)})_{i\ge k}\), \((H^{(k)}_{i,j})_{i,j\ge k}\) satisfy the following induction rule:
\[
\begin{cases}
H_{i,j}^{(k+1)}=H_{i,j}^{(k)}+\frac{H_{k,i}^{(k)}H_{k,j}^{(k)}}{x_{k}^{(k)}}, & i,j\ge k+1,
\\
x_{i}^{(k+1)}=x_{i}^{(k)}-\frac{(H_{k,i}^{(k)})^{2}}{x_{k}^{(k)}},& i\ge k+1.
\end{cases}
\]

At step \(n\), we have
\[T_{n-1}\cdots T_1 M=M^{(n)}=
\begin{pmatrix}
    x^{(1)}_1& -H^{(1)}_{1,2}&\cdots&-H^{(1)}_{1,n}\\
    0&x^{(2)}_2&\cdots&-H^{(2)}_{2,n}\\
   \vdots &\ddots&\ddots&-H^{(n-1)}_{n-1,n}\\
    0&\cdots&0&x^{(n)}_n
  \end{pmatrix}\]
Hence, it gives the LU-decomposition of \(M\) where \(L^{-1}=T=T_{n-1}T_{n-2}\cdots T_{1}\) and \(U=M^{(n)}\). It is easy to check that 
\[
\begin{cases}
  x_{i}=x_{i}^{(i)} & i=1,\ldots,n\\
  H_{i,j}= H_{i,j}^{(i)} & i<j
\end{cases}
\]
satisfy the recursion in the statement, and that \(x_i=M(1,\ldots,i|1,\ldots,i)/M(1,\ldots,i-1|1,\ldots,i-1)\).

\end{proof}
\section{Time rescaling}
\label{timechange}
Let \(Y_{s}\) be the VRJP with conductances \((W)\) and initial local time \((\phi_i)_{i\in V}\) defined in Section~\ref{sec:vrjp}.
Recall that \(L_{i}(t)=\phi_{i}+\int_{0}^{t}\mathds{1}_{Y_{s}=i}ds\).
Consider the increasing functional \(A(s)=\sum_{i}(\frac{L_{i}(s)}{\phi_{i}}-1)\), and  the time-changed 
process \(\tilde{Y}_{\tilde s}=Y_{A^{-1}(\tilde s)}\). Let 
\[
\tilde L_i(\tilde s)= 1+ \int_{0}^{t}\mathds{1}_{\{\tilde Y_{\tilde s}=i\}}d\tilde s.
\]
We always denote by \(\tilde s\) the time scale of \(\tilde Y\), we can write 
\[
\tilde s=A(s),\;\;  d\tilde s= {d s\over \phi_{Y_s}},\;\;  L_i(\tilde s)= {1\over \phi_i} L_i(s).
\]
Obviously, \(\tilde Y\)
is a VRJP with edge weight \(W_{i,j}\phi_{i}\phi_{j}\) and initial local local time 1 : that is, 
conditionally on \(\mathcal{F}^{\tilde Y}_{\tilde s}\), \(\tilde{Y}\) jumps from \(i\) to \(j\) at rate
\[W_{i,j}\phi_{i}\phi_{j}\tilde L_{j}(\tilde s).\]
Note for simplicity
 \[
 W^\phi_{i,j}= W_{i,j}\phi_i\phi_j.
 \]
We can apply~\cite{sabot2011edge} Theorem 2 to \(\tilde Y\). Let 
\[\tilde D(\tilde s)=\sum_{i}\tilde L_i(\tilde s)^2-1,\] 
and set \(\tilde Z_{\tilde t}=\tilde{Y}_{\tilde D^{-1}(\tilde t)}\), with local time \(\tilde \ell_{i}(\tilde t)=\int_{0}^{\tilde t}\mathds{1}_{\tilde X_{u}=i}du\). 
By proposition 1 of~\cite{sabot2011edge} translated in time scale \(L\) (cf relation (2.1) of~\cite{sabot2011edge}), we have that
\(\log \tilde L_i(\tilde s)-{1\over N}\sum_{j\in V}  \log \tilde L_j(\tilde s)\) converges a.s. when \(\tilde s\to \infty\) to a random vector with distribution
given by (3.1) of theorem 1 of~\cite{sabot2011edge}, where the  weights \((W_{i,j})\) are replaced by
 \((W^\phi_{i,j})\). Changing to variables \(u_i\to u_i-u_{i_0}\), we deduce 
\[
\lim_{\tilde s\to \infty} \log \tilde L_i(\tilde s)-\log \tilde L_{i_0}(\tilde s)=U_i
\]
exists and has distribution 
\begin{equation*}
\mathcal{Q}^{W^\phi}_{i_0}(du)=
{1 \over \sqrt{2\pi}^{N-1}} e^{-\sum_{j\in V} u_j} 
e^{-\frac{1}{2}\sum_{i\sim j}W^\phi_{i,j}(\cosh(u_i-u_j)-1)}
\sqrt{D(W^\phi,u)}\; du,
\end{equation*}
and that \(\tilde Z\) is a mixture of Markov Jump Process with jumping rates \(\demi W^\phi_{i,j} e^{U_j-U_i}\). We now come back to
\((Z_t)\). Recall that \(Z_t=Y_{D^{-1}(t)}\), where \(D(t)\) is defined in (\ref{Dt}).
>From this we have
\[
\tilde t=\tilde D(A(D^{-1}(t))),
\]
and 
\[
d\tilde t={1\over \phi_{\tilde Y_{\tilde s}}} {\tilde L_{\tilde Y_{\tilde s}}(\tilde s)\over L_{Y_s}(s)}dt= {1\over  \phi^2_{Z_{t}}}dt.
\]
This implies that \((Z_t)\) is a mixture of Markov Jump  processes with jumping rates 
\(\demi W_{i,j} e^{U_j+\log\phi_j-U_i-\log\phi_i}\). 
By simple change of variables, \(U_i+\log \phi_i-\log\phi_{i_0}\) has distribution
\begin{equation*}
\mathcal{Q}^{W,\phi}_{i_0}(du)={\prod_{j\neq i_0} \phi_j \over \sqrt{2\pi}^{N-1}} e^{-\sum_{j\in V} u_j} e^{-\frac{1}{2}\sum_{i\sim
        j}W_{i,j}(e^{u_i-u_j}\phi_j^2+e^{u_j-u_i}\phi_i^2-2\phi_i\phi_j)
        } \sqrt{D(W,u)}\; du.
\end{equation*}

\label{Appendix}
\noindent{{\it Acknowledgment : }
The authors are very grateful to G\'erard Letac for a useful remark at an early stage of this work.
They are also grateful to Persi Diaconis and G\'erard Letac for interesting discussions about the measure that appears in Theorem 
\ref{thm_main}.
}

\bibliography{bib}
\bibliographystyle{plain}
\end{document}